\documentclass[11pt]{article}
\usepackage{amsmath,amsthm,amssymb,mathabx,ifsym,a4wide} 
\usepackage{amscd, mathrsfs, wasysym, color}  
\usepackage{graphicx}
\usepackage{epsfig} 
\usepackage{slashed}  
\usepackage[hidelinks]{hyperref}
\usepackage[most]{tcolorbox} 
\newtheorem{theorem}{Theorem}[section]
\newtheorem{proposition}[theorem]{Proposition}
\newtheorem{lemma}[theorem]{Lemma}

\numberwithin{equation}{section} 
\numberwithin{figure}{section}  
%

%---------------------------------------------------------------

\newcommand \la \langle
\newcommand \ra \rangle

\newcommand \Ecal {\mathcal{E}}

\newcommand \underdel {\underline \partial}

\newcommand \trianglerightNEW \triangleright

\newcommand \uh {\widehat u}

\newcommand \auth {\textsc}

\newcommand \Kcal {\mathcal K}

\newcommand \bei {\begin{itemize}}
\newcommand \eei {\end{itemize}}
\newcommand \be {\begin{equation}}
\newcommand \bel {\be\label}
\newcommand \ee {\end{equation}}
\newcommand \del \partial
\newcommand \RR {{\mathbb R}}

\newcommand \Hcal {\mathcal H}

\newcommand \eps \epsilon

%----------------------------------------------------------------------
\let\oldmarginpar\marginpar
\renewcommand\marginpar[1]{\-\oldmarginpar[\raggedleft\footnotesize #1]%
{\raggedright\footnotesize #1}}
%
%=====================================================================================

\begin{document}

\title{\bf The zero mass problem for Klein-Gordon equations: quadratic null interactions} 

\author{Shijie Dong\footnote{
\normalsize Fudan University, School of mathematical sciences. 
\newline
Email : {\sl dongs@ljll.math.upmc.fr, shijiedong1991@hotmail.com}
\newline AMS classification: 35L05, 35L52, 35L71.
{\sl Keywords and Phrases.} Klein-Gordon equation with vanishing mass; global existence result,
 uniform decay estimates, convergence result. 
}}

\date{April, 2020}

\maketitle

\begin{abstract}
We study in $\RR^{3+1}$ a system of nonlinearly coupled Klein-Gordon equations under null condition, with (possibly vanishing) mass varying in the interval $[0, 1]$. Our goal is three folds: 1) we want to establish the global well-posedness result to the system which is uniform in terms of the mass parameter; 2) we want to obtain unified pointwise decay result for the solution to the system, in the sense that the solution decays more like a wave component (independent of the mass parameter) in certain range of time, while the solution decays as a Klein-Gordon component with a factor depending on the mass parameter in the other part of the time range; 3) the solution to the Klein-Gordon system converges to the solution to the corresponding wave system in certain sense when the mass parameter goes to 0. In order to achieve these goals, we will rely on both the flat and the hyperboloidal foliation of the spacetime.
\end{abstract}

\tableofcontents
 
%\

%=======================================================================

\section{Introduction}

\subsection{Motivation and revisit of the classical results}

The study of nonlinear wave equations, nonlinear Klein-Gordon equations, and their coupled systems has been an active area of research since decades ago, and among which the question "what kind of quadratic nonlinearities lead to global-in-time solutions" has attracted special attention from the researchers. Recall, on one hand, that wave equations in $\RR^{3+1}$ with null form nonlinearities were proved to admit global-in-time solutions independently by Klainerman \cite{Klainerman86} and Christodoulou \cite{Christodoulou}, and the generalisations \cite{Sideris96, Sideris00, L-R-cmp, L-R-annals, P-S-cpam} for instance. 
On the other hand, it was shown by Klainerman \cite{Klainerman85} and Shatah \cite{Shatah} that Klein-Gordon equations with general quadratic nonlinearities in $\RR^{3+1}$ admit small solutions. 
Besides, the global well-posedness results for different types of coupled wave and Klein-Gordon systems, with or without physical models behind, were obtained, see for instance \cite{Bachelot, DW, DLW, Georgiev, Ionescu-P, Ionescu-P2, Katayama12a, Klainerman-QW-SY, PLF-YM-book, PLF-YM-cmp, PLF-YM-arXiv1, PLF-YM-arXiv2, OTT, PsarelliA, PsarelliT, Tsutsumi, Tsutsumi2, Wang}.

We recall that (linear) Klein-Gordon components decays $t^{-1/2}$ faster than the (linear) wave components in $\RR^{n+1}$ ($n \geq 1$), and the presence of the mass term allows one to control the $L^2$--type energy of the Klein-Gordon components by their natural energies. Both of these make it less difficult to study nonlinear Klein-Gordon equations in $\RR^{3+1}$. On the other hand, we can utilise the scaling vector field which makes it easy to apply the Klainerman-Sobolev inequality, and can reply on the conformal energy estimates to obtain $L^2$--type energy estimates for wave components (with no derivatives), in studying nonlinear wave equations (but not Klein-Gordon equations). Thus we can see from the above comparisons that there are different features that help to study pure wave equations and pure Klein-Gordon equations.

Concerning the fact that Klein-Gordon equations become wave equations when the masses are set to be 0, a natural interesting question is that for the Klein-Gordon equations with varying mass in $[0, 1]$ what kind of quadratic nonlinearities can ensure the small data global existence results which are uniform in terms of the varying mass parameter. Our primary goal is prove that all kinds of the null nonlinearities can uniformly guarantee the global existence results for systems of Klein-Gordon equations with mass varying in $[0, 1]$. Such results are known to be valid at the end points $0, 1$, which are corresponding to wave equations and Klein-Gordon equations with fixed mass respectively. But more is involved if one wants get the global results uniform in terms of mass parameter in $[0, 1]$, which is due to the fact that we cannot use the scaling vector field, cannot obtain mass independent $L^2$--type estimates by the mass term or by the conformal energy estimates, ect.

In addition, the study of the Klein-Gordon equations with varying mass (especially when the mass goes to 0) is also motivated from the study of the mathematical physics. We briefly recall in \cite{DLW} that when studying the electroweak standard model there appear several Klein-Gordon equations with different masses, and physical experiments have verified that some of the masses are extremely small (close to 0 but still positive) compared to others. Thus it is important to obtain results which are uniform in terms of the small masses for Klein-Gordon equations.

\subsection{Model of interest}

We will consider the following system of coupled Klein-Gordon equations with varying mass $m \in [0, 1]$:
\bel{eq:model-mKG}
\aligned
- \Box v_i + m^2 v_i = N^{jk}_i Q_0 (v_j, v_k) + M^{jk \alpha \beta}_i Q_{\alpha \beta} (v_j, v_k),
\endaligned
\ee
with initial data prescribed on $t = t_0 = 2$
\bel{eq:mKG-ID}
\big( v_i, \del_t v_i \big)(t_0)
=
\big( v_{i0}, v_{i1} \big).
\ee
In the above, $\Box = \eta^{\alpha \beta} \del_{\alpha \beta}$ is the wave operator, where $\eta = \text{diag}(-1, 1, 1, 1)$ is the metric of the spacetime, and Einstein summation convention is adopted. The indices $i, j, k \in \{1, \cdots, N_0 \}$ with $N_0$ the number of equations (also the number of unknowns), and we use $a, b, c, \cdots \in \{1, 2, 3\}$ and $\alpha, \beta, \gamma, \cdots \{0, 1, 2, 3\}$ to denote the space indices and the spacetime indices respectively. Besides, throughout the paper we will also use $A \lesssim B$ to indicate $A \leq C B$, with $C$ a generic constant (independent of the mass parameter $m$).

At the end points of $m = 0$ and $m = 1$, the small data global existence result (as well as other properties of the solution) for system \eqref{eq:model-mKG}--\eqref{eq:mKG-ID} is well-known, and the proof depends on the features of the pure wave equations and the pure Klein-Gordon equations. Here we want to establish the global existence result and explore the properties of the solution for the system \eqref{eq:model-mKG}, which are uniform in terms of the mass parameter $m \in [0, 1]$. Besides, it is also interesting to show that the solution to \eqref{eq:model-mKG} converges to the corresponding wave system when $m \to 0$. Since some features of the pure wave equations or of the pure Klein-Gordon equations cannot be relied on in obtaining the uniform result, the analysis of the proof is more subtle, and requires new insights.

\subsection{Difficulties and new observations}

When studying the Klein-Gordon equations, the most well-known difficulty is that one cannot use the scaling vector field, which is due to the fact that the scaling vector field does not commute with the Klein-Gordon operators. However, more difficulties arise in studying the Klein-Gordon system with possibly vanishing mass. 

First, in order to apply the Sobole--type inequalities to obtain pointwise decay results for the Klein-Gordon components $v = (v_i)$ or to estimate the null forms, we need to bound the $L^2$--type norm for $v_i$, which is supposed to be mass independent. 
On one hand, the presence of the mass term in the Klein-Gordon equation does not seem to be helpful in obtaining $L^2$--type energy estimate. That is because what we can get from the mass term is only
$$
m \| v \|_{L^2} \lesssim B,
\qquad
\text{i.e. } \| v \|_{L^2} \lesssim m^{-1} B \footnote{$m^{-1}$ is interpreted as $+\infty$ when $m=0$.},
$$
which is mass dependent, but the bound for $\| v \|_{L^2}$ blows up when $m$ goes to 0,
and in the above $B$ represents some bound from the energy estimates.
On the other hand, the conformal energy estimates allow one to get the $L^2$--type estimates for wave components (i.e. the cases of $m = 0$), but they cannot be applied any more due to the presence of the mass term, which means we cannot obtain $L^2$--type estimates for $v_i$ using the conformal energies.
Second, the solution to the system \eqref{eq:model-mKG} does not decay sufficiently fast. In general we can expect solutions to Klein-Gordon equations with fixed mass to decay like $t^{-3/2}$ in $\RR^{3+1}$, but due the possibly vanishing mass $m \in [0, 1]$, the best we can expect for the Klein-Gordon components $v_i$ is the (mass independent) wave decay, i.e.
$$
|v_i| \lesssim t^{-1},
$$
and the (mass dependent) Klein-Gordon decay
$$
|v_i| \lesssim m^{-1} t^{-3/2}.
$$
Besides, the null form of the type $\del^\alpha v_j \del_\alpha v_k$ does not seem to decay sufficiently fast. Because it is not consistent with the Klein-Gordon equations, since we need to use the scaling vector field to gain a good factor of $t^{-1}$ from $\del^\alpha v_j \del_\alpha v_k$, but the scaling vector field is not consistent with the Klein-Gordon equations. Last but not least, there are some difficulties in gaining the factor $t^{-1}$ from the null forms $Q_0 (v_j, v_k), Q_{\alpha \beta}(v_j, v_k)$ in the highest order energy, which is due to the lack of the conformal energy estimate again.

In order to tackle the problems brought by the presence but possibly vanishing mass term, we will rely on the following observations and insights. First, we will use the hyperboloidal foliation of the spacetime to prove the (uniform) global existence result for the system \eqref{eq:model-mKG}, which is developed by Klainerman \cite{Klainerman85}, Hormander \cite{Hormander}, LeFloch-Ma \cite{PLF-YM-book}, and Klainerman-Wang-Yang \cite{Klainerman-QW-SY}, etc. We will take the advantage that the null forms ($Q_0, Q_{\alpha \beta}$) can be decomposed as sums of products of good components in the hyperboloidal setting (see Lemma \ref{lem:est-null-h}), and this is true even for the highest order energy. As a consequence, we can obtain the mass dependent pointwise decay result
$$
\big| v_i (t, x) \big| \lesssim m^{-1} t^{-3/2}.
$$
Next, we will move to the usual flat foliation of the spacetime to show the unified pointwise decay result. To achieve this, we will obtain the mass independent $L^2$--type estimates for the solution $v = (v_i)$ by using tricks from the Fourier analysis. To be more precise, we write the Klein-Gordon equation in the Fourier space, and solve the corresponding ordinary differential equation to get the solution in the Fourier space, and then obtain the mass independent $L^2$--type estimates for the solution $v = (v_i)$ (see Proposition \ref{prop:Va-key}). However, according to Proposition \ref{prop:Va-key}, we need to gain the factor $t^{-1}$ from the null nonlinearities to get sufficiently good $L^2$--type estimates for the solution. For the null forms of type $Q_{\alpha \beta}$, we easily have
$$
\big| Q_{\alpha \beta} (v_j, v_k) \big|
\lesssim {1\over t} \big( |\Gamma v_j| |\del v_k| + |\Gamma v_k| |\del v_j|  \big),
\qquad
\Gamma \in \{ \del_\alpha, \Omega_{ab}, L_a \},
$$
and for the high order case, the observation (from \cite{Katayama12a}) helps
$$
Q_{\alpha \beta} (v_j, v_k) = \del_\alpha (v_j \del_\beta v_k) - \del_\beta (\del_\alpha v_k v_j),
$$
which is thanks to the hidden divergence form of the null nonlinearities $Q_{\alpha \beta}$.
At a first glance, it does not seem to be possible to gain the factor $t^{-1}$ from the null form $Q_0$, which is because we do not have any good control on the scaling vector field, but we observe that a nonlinear transformation will transform the quadratic term $Q_0$ to the sum of cubic terms and quadratic terms with a good factor $m^2$ in front. These observations allow us to obtain the mass independent $L^2$--type estimates and hence the pointwise decay result for the solution $v = (v_i)$. More details following, we will divide the solution into several parts, and conduct the analysis on each part according to their features. For the parts where we can gain $t^{-1}$ factor from the null form or the nonlinearities are cubic, Proposition \ref{prop:Va-key} will be sufficient to obtain the mass independent pointwise decay. For the part with quadratic nonlinearities with the factor $m^2$, we will be carefully study the $m$ dependent relation of the norms of the nonlinearities, and try to gain the factor $m$ to cancel the one appearing in the energy.

\subsection{Main theorem}

Now we provide the statement of the main result.

\begin{theorem}\label{thm:main-mKG}
Consider the systems of Klein-Gordon equations \eqref{eq:model-mKG} with mass $m \in [0, 1]$, and let $N \geq 6$ be an integer. There exists small $\eps_0 > 0$, such that for all $\eps < \eps_0$, and all compactly supported initial data which are small in the sense that
\be 
\| v_{i0} \|_{H^{N+1}} + \| v_{i1} \|_{H^N}
\leq \eps, 
\qquad \text{for all i,}
\ee
then the Cauchy problem \eqref{eq:model-mKG}--\eqref{eq:mKG-ID} admits a global-in-time solution $v = (v_i)$. In addition, the solution decays uniformly in terms of the mass parameter $m$ as
\bel{eq:m-decay}
\big| v_i(t, x) \big|
\lesssim
{1 \over t + m t^{3/2}}.
\ee
\end{theorem}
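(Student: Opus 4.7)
The plan is to prove \eqref{eq:m-decay} as the pointwise minimum of two separate decay estimates: a mass-dependent Klein-Gordon rate $m^{-1}t^{-3/2}$ obtained on a hyperboloidal foliation, and a mass-independent wave-like rate $t^{-1}$ obtained on the flat foliation. Global existence will fall out of whichever bootstrap closes first, and \eqref{eq:m-decay} follows once both estimates are in hand.

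First, I would run a standard bootstrap on the hyperboloids $\{t^2 - |x|^2 = s^2,\ t>0\}$, commuting the system with the Lorentz boosts $L_a$, rotations $\Omega_{ab}$, and translations $\partial_\alpha$, but not with the scaling field $S$ (which fails to commute with $\Box - m^2$). The crucial pointwise input is the decomposition of $Q_0(v_j,v_k)$ and $Q_{\alpha\beta}(v_j,v_k)$ in the semi-hyperboloidal frame recorded in Lemma \ref{lem:est-null-h}, which expresses each null form as a sum of products with at least one \emph{good} derivative factor, even at top order. The hyperboloidal energy identity combined with the Klainerman--Sobolev inequality on hyperboloids should close the bootstrap uniformly for $m \in [0,1]$ and, after reading off the Klein-Gordon decay from the mass term, deliver $|v_i(t,x)| \lesssim m^{-1} t^{-3/2}$; in particular the global solution exists throughout the whole range of $m$.

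Next, to turn this into the $m$-independent bound $|v_i(t,x)| \lesssim t^{-1}$, I would work on constant-$t$ slices and invoke Proposition \ref{prop:Va-key}, whose Fourier-ODE argument produces uniform-in-$m$ $L^2$ estimates for $v_i$ itself (no derivatives) provided the source $N^{jk}_i Q_0(v_j,v_k) + M^{jk\alpha\beta}_i Q_{\alpha\beta}(v_j,v_k)$ carries an integrable-in-time extra factor $t^{-1}$ in $L^2$. For $Q_{\alpha\beta}$ this gain is direct: at low order from $|Q_{\alpha\beta}(v_j,v_k)| \lesssim t^{-1}(|\Gamma v_j||\partial v_k| + |\Gamma v_k||\partial v_j|)$, and at the highest order from the divergence identity $Q_{\alpha\beta}(v_j,v_k) = \partial_\alpha(v_j \partial_\beta v_k) - \partial_\beta(\partial_\alpha v_k\, v_j)$ of \cite{Katayama12a}, which shifts a derivative off the problematic factor. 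For $Q_0$ no direct $t^{-1}$ gain is available (the scaling field is unusable), so I would implement a nonlinear change of unknown. Starting from $2Q_0(v_j,v_k) = \Box(v_j v_k) - v_j\Box v_k - v_k \Box v_j$ and substituting \eqref{eq:model-mKG} on the right produces $Q_0(v_j,v_k) = \tfrac12 \Box(v_j v_k) - m^2 v_j v_k + (\text{cubic terms})$. Defining $\widetilde v_i = v_i - (\text{suitable quadratic in } v)$ absorbs $\Box(v_j v_k)$ into the left-hand side, and the resulting equation for $\widetilde v_i$ has $Q_0$ replaced by cubic terms plus a quadratic term carrying a prefactor $m^2$. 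The $m^2$ precisely cancels the $m^{-1}$ loss that Proposition \ref{prop:Va-key} would otherwise cost, and Sobolev embedding then yields the $m$-independent decay $|v_i(t,x)| \lesssim t^{-1}$.

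Taking the pointwise minimum of the two bounds is exactly \eqref{eq:m-decay}. The main obstacle I expect is the nonlinear-transformation step for $Q_0$: one must check that $v \mapsto \widetilde v$ is invertible in the small-data regime uniformly in $m$, that it interacts well with every commutator used at top order, and, most delicately, that the cubic remainder really does satisfy the time-integrability hypothesis of Proposition \ref{prop:Va-key}. The careful accounting of which pieces see the $m^2$ gain, which pieces see only cubic decay, and how the divergence form for $Q_{\alpha\beta}$ is fed in at the highest order is the genuine heart of the argument; everything else is bookkeeping within the now-standard hyperboloidal and vector-field calculus.
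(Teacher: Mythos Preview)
Your proposal is correct and follows essentially the same route as the paper: hyperboloidal bootstrap using Lemma~\ref{lem:est-null-h} for global existence and the $m^{-1}t^{-3/2}$ decay, then on flat slices the normal-form substitution $V_i = v_i + N_i^{jk} v_j v_k$ to trade $Q_0$ for cubic terms plus an $m^2 v_j v_k$ piece, combined with the divergence identity for $Q_{\alpha\beta}$ and Proposition~\ref{prop:Va-key} to obtain the mass-independent $t^{-1}$ bound. The only refinement the paper adds beyond your sketch is an explicit splitting $V_i = V_{c,i} + V_{m,i} + V_{n,i}$ (and a further decomposition $V_{n,i} = V_{n,i}^5 + \del_\gamma V_{n,i}^\gamma$) so that each piece can be estimated by the tool suited to it---Proposition~\ref{prop:Va-key} for the cubic and $Q_{\alpha\beta}$ parts, but the ordinary energy $E_m$ (not Proposition~\ref{prop:Va-key}) for the $m^2 v_j v_k$ part, where the extra $m$ is cancelled against the $m$ in $E_m^{1/2}$ rather than against an $m^{-1}$ from the Fourier argument.
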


In the proof of Theorem \ref{thm:main-mKG}, we will always assume $m \in (0, 1]$ unless specified since the result for the case of $m = 0$ is classical. We will also assume the initial data $(v_{i0}, v_{i1})$ are spatially supported in the unit ball $\{(x, t) : t = t_0 = 2, |x| \leq 1 \}$, but the results in the theorem still holds for all of the initial data with compact support, see the remark in \cite{DW}. We note that the compactness assumption implies
$$
\| v_{i1} \|_{L^{6/5}(\RR^3)}
\lesssim \| v_{i1} \|_{L^2(\RR^3)},
$$
and this will be used when applying Proposition \ref{prop:Va-key}.

It can be seen from Theorem \ref{thm:main-mKG} that the global existence result and the pointwise decay result are both consistent with the cases of $m = 0$ and $m = 1$, which are the usual wave equations and the usual Klein-Gordon equations (with fixed mass). 
Worth to mention, the unified decay result \eqref{eq:m-decay} shows that the solution decays more like a wave component (with no $m$ dependence) as $t^{-1}$ in the time range $t \in [t_0, m^{-2})$, while it decays more like a Klein-Gordon component (with $m$ dependence) as $m^{-1} t^{-3/2}$ in the rest part of the time range (if non-empty).
In addition to the results contained in Theorem \ref{thm:main-mKG}, we have the following convergence result, which tells us that the solution to the system \eqref{eq:model-mKG} converges to the solution to the corresponding wave system (i.e. the system \eqref{eq:model-mKG} with $m=0$) in certain sense. Let
$$
v^{(m)}, 
\qquad
m \in [0, 1],
$$
denote the solution to the system \eqref{eq:model-mKG} with mass $m$, and we can now demonstrate the convergence theorem.

\begin{theorem}\label{thm:converge}
Consider the system \eqref{eq:model-mKG}, and let the same assumptions in Theorem \ref{thm:main-mKG} hold. Then the solution to the system \eqref{eq:model-mKG} with mass $m$ converges to the system \eqref{eq:model-mKG} with $m = 0$, in the sense that (with $0 < \delta \ll 1$)
\bel{eq:thm-converge} 
\big\| \del \del^I L^J ( v^{(m)} - v^{(0)} ) \big\|_{L^2}
+
m \big\| \del^I L^J ( v^{(m)} - v^{(0)} ) \big\|_{L^2}
\lesssim m^2 t^{1+\delta},
\qquad
|I| + |J| \leq N.
\ee
\end{theorem}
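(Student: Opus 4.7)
The plan is to study the difference $w_i := v_i^{(m)} - v_i^{(0)}$ and close a bootstrap argument at the claimed rate $m^2 t^{1+\delta}$. First, subtracting the mass-$m$ and mass-$0$ equations shows that $w_i$ solves the Klein-Gordon equation with mass $m$,
$$
(-\Box + m^2) w_i \;=\; -\,m^2\, v_i^{(0)} \;+\; N^{jk}_i \bigl[Q_0(v_j^{(m)},v_k^{(m)}) - Q_0(v_j^{(0)},v_k^{(0)})\bigr] \;+\; M^{jk\alpha\beta}_i\bigl[Q_{\alpha\beta}(v_j^{(m)},v_k^{(m)}) - Q_{\alpha\beta}(v_j^{(0)},v_k^{(0)})\bigr],
$$
with vanishing Cauchy data at $t=t_0=2$. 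By bilinearity, each nonlinear difference rewrites as $Q(w,v^{(m)}) + Q(v^{(0)},w)$, so the right-hand side is linear in $w$ apart from the explicit source $-m^2 v_i^{(0)}$, whose size $m^2$ is precisely what drives the convergence rate.

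Next, I would commute with $\partial^I L^J$ for $|I|+|J|\le N$. Since $L_a$ and $\partial_\alpha$ commute with $-\Box + m^2$, the commutators only redistribute orders inside the nonlinear terms. The standard Klein-Gordon energy identity then yields, thanks to the zero initial data,
$$
\bigl\|\partial\,\partial^I L^J w(t)\bigr\|_{L^2} \;+\; m\,\bigl\|\partial^I L^J w(t)\bigr\|_{L^2} \;\lesssim\; \int_{2}^{t} \bigl\|\partial^I L^J (\mathrm{RHS})(s)\bigr\|_{L^2}\,ds.
$$
I would then bootstrap the ansatz $\|\partial\partial^I L^J w\|_{L^2} + m\|\partial^I L^J w\|_{L^2} \le C_1 m^2 t^{1+\delta}$ on a maximal interval $[2,T)$ and improve it. The linear source contributes $m^2\,\|\partial^I L^J v^{(0)}\|_{L^2}$, which is controlled by the mass-$0$ bounds on $v^{(0)}$ already established in the proof of Theorem~\ref{thm:main-mKG} together with the mass-independent $L^2$ control of Proposition~\ref{prop:Va-key} for the zero-derivative top-order piece; its time integral is of order $m^2 t^{1+\delta}$.

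For the nonlinear differences, Leibniz produces schematically $\partial^{I_1}L^{J_1}w \cdot \partial^{I_2}L^{J_2} v^{(\star)}$ with $v^{(\star)}\in\{v^{(m)},v^{(0)}\}$. I would place the higher-order factor in $L^2$ (from the bootstrap on $w$, or from Theorem~\ref{thm:main-mKG} applied to $v^{(\star)}$) and the lower-order factor in $L^\infty$ (via Klainerman-Sobolev or directly via \eqref{eq:m-decay}). The null structure supplies the Klainerman-type $t^{-1}$ gain
$$
\bigl|Q_{\alpha\beta}(u,v)\bigr| \;\lesssim\; t^{-1}\bigl(|\Gamma u|\,|\partial v| + |\partial u|\,|\Gamma v|\bigr),\qquad \Gamma\in\{\partial_\alpha,\Omega_{ab},L_a\},
$$
and an analogous mechanism applies to $Q_0$ after invoking the nonlinear transformation of Section~1.3, which replaces $Q_0(v^{(\star)}, v^{(\star)})$ by a cubic term plus an $m^2$-multiplied quadratic remainder (the latter being of the same size $m^2$ as the main source and therefore harmless). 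Every integrand is then of size $m^2 t^{\delta-1}$ or better, whose time integral is absorbed into $\tfrac12 C_1 m^2 t^{1+\delta}$, closing the bootstrap.

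The hard part will be the top-order case $|I|+|J|=N$, where no Klainerman-Sobolev $L^\infty$ bound on $w$ is available: here the highest-order factor in each nonlinear term must sit in $L^2$ with mass-independent control. I would handle this exactly as in the proof of Theorem~\ref{thm:main-mKG}, exploiting the hidden divergence form $Q_{\alpha\beta}(u,v)=\partial_\alpha(u\partial_\beta v)-\partial_\beta(\partial_\alpha v\cdot u)$ to shift a derivative outside and avoid regularity loss, and carefully tracking how the $Q_0$ transformation distributes across the subtraction $v^{(m)}-v^{(0)}$. Combined with Proposition~\ref{prop:Va-key} for the mass-independent $L^2$ control of $w$ itself, this closes the top-order estimate and yields \eqref{eq:thm-converge}.
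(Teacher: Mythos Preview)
Your overall strategy—energy estimates on the difference $w=v^{(m)}-v^{(0)}$, with the $m^2$ coming from the explicit source $-m^2 v^{(0)}$—matches the paper's. However, you have imported the entire machinery of Section~\ref{sec:decay} (null-structure estimates, the nonlinear transformation for $Q_0$, the divergence form of $Q_{\alpha\beta}$, Proposition~\ref{prop:Va-key}), and the paper uses \emph{none} of it here. The key point you miss is that the null forms only involve first derivatives, $|Q(u,v)|\lesssim |\del u|\,|\del v|$, and the pointwise decay $|\del\,\del^I L^J v^{(\star)}|\lesssim C_1\eps\, t^{-1}$ for $|I|+|J|\le N-2$ was already obtained in Section~\ref{sec:existence}. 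Hence the crude bound
\[
\big\|\del^I L^J Q(w,v^{(\star)})\big\|_{L^2}\;\lesssim\;C_1\eps\,t^{-1}\,E_m(t,\del^I L^J w)^{1/2}
\]
holds with no reference to null structure, and Gronwall (not a bootstrap) closes immediately at order $\le N-2$. The paper then applies Klainerman--Sobolev to the order-$(N-2)$ bound on $w$ to obtain an $L^\infty$ estimate for $\del w$ at low orders carrying the factor $m^2$, and runs Gronwall again at order $N$.

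Two of your proposed devices are not merely unnecessary but problematic as stated. First, the nonlinear transformation $V_i=v_i+N_i^{jk}v_jv_k$ is designed to cancel $Q_0(v_j,v_k)$ in the $v_i$-equation; the $Q_0$ terms in the difference equation are $Q_0(w_j,v_k^{(m)})+Q_0(v_j^{(0)},w_k)$, and that transformation does not eliminate them. One could subtract the transformed equations for $V^{(m)}$ and $V^{(0)}$ instead, but you have not said this, and it is far more work than the direct route. Second, the divergence identity $Q_{\alpha\beta}(u,v)=\del_\alpha(u\del_\beta v)-\del_\beta(u\del_\alpha v)$ is useful in Section~\ref{sec:decay} because there one feeds $L^{6/5}$ norms into Proposition~\ref{prop:Va-key}; in the flat energy estimate used here it moves a derivative \emph{outside} and therefore does not help with the top-order loss you are worried about. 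Once you drop these detours and use the crude estimate plus Gronwall, the ``hard part'' you flag at $|I|+|J|=N$ disappears.
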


We note that Theorem \ref{thm:converge} indicates that the solution $v^{(m)}$ tends to $v^{(0)}$ at the rate $m^2$ when $m \to 0$ on each fixed slice $t = constant$, but the bounds for the energy of the difference $v^{(m)} - v^{(0)}$ blow up as $t \to +\infty$ for each fixed $m$.

\subsection{Outline}

The rest of this paper is organised as follows: In Section \ref{sec:pre}, we revisit some notations, Sobolev--type inequalities, and basic results on the Klein-Gordon equations. Next, we provide the key result on obtaining mass independent $L^2$ norm estimates for solutions to the Klein-Gordon equations with possibly vanishing masses in Section \ref{sec:key}. Then we prove the global existence result in Section \ref{sec:existence}. Finally, the proof for the mass independent decay result and the proof for the convergence result are illustrated in Section \ref{sec:decay} and Section \ref{sec:converge} respectively.

%==========================================================================================================

\section{Preliminaries}\label{sec:pre}

\subsection{Basic notations}

We work in the $(3+1)$ dimensional spacetime with metric $\eta = \text{diag} (-1, 1, 1, 1)$. We write a point $(x^0, x^a) = (t, x^a)$, and the indices are raised or lowered by the metric $\eta$. We use 
$$
\aligned
&\del_\alpha = \del_{x^\alpha}, 
&\qquad \qquad \alpha = 0, 1, 2, 3,
\\
&\Omega_{ab} = x_a \del_b - x_b \del_a, 
&\qquad \qquad a, b = 1, 2, 3, \text{ and } a<b,
\\
&L_a = x_a \del_t + t \del_a,
&\qquad \qquad a = 1, 2, 3,
\endaligned
$$ 
to denote the vector fields of translation, rotation, and Lorentz boosts respectively. For convenience, we use $\del, \Omega, L$ to represent a general vector field of translation, rotation, and Lorentz boost respectively, and with the notation 
$$
V = \{ \del_\alpha, \Omega_{ab}, L_a \},
$$
$\Gamma$ is used to represent a general vector field in $V$.

When it turns to the hyperboloidal foliation of the spacetime of the cone $\Kcal := \{(t, x) : t \geq t_0 = 2, t \geq |x| + 1 \}$, we use $\Hcal_s = \{(t, x) : t^2 = |x|^2 + s^2 \}$ to denote a hyperboloid at hyperbolic time $s$ with $s \geq s_0 = 2$. We note that throughout we will only consider (unless specified) functions with support in $\Kcal$, since the solution to \eqref{eq:model-mKG} is supported in $\Kcal$. We emphasize here that for all points $(t, x) \in \Kcal \bigcap \Hcal_s$ ($s \geq 2$), the following relations hold
\be 
s \leq t \leq s^2,
\qquad
|x| \leq t.
\ee
In order to adapt to the hyperboloidal foliation of the spacetime, we first recall the semi-hyperboloidal frame introduced in \cite{PLF-YM-book}, which is defined by
\be 
\underdel_0 = \del_t,
\qquad
\underdel_a = {L_a \over t} = \del_a + {x_a \over t} \del_t.
\ee
We can also represent the usual partial derivatives $\del_\alpha$ in terms of the semi-hyperboloidal frame by
\be 
\del_0 = \underdel_0,
\qquad
\del_a = - {x_a \over t} \underdel_0 + \underdel_a.
\ee

We denote the Fourier transform of a nice function $u$ by
$$
\uh (\xi) = \int_{\RR^3} u(x) e^{-2\pi x \cdot \xi} \, dx.
$$
We recall some properties regarding the Fourier transform, which will be used in the analysis.
The partial derivatives are reflected by Fourier multipliers
\be 
\widehat{\del_a u} (\xi)
= 2\pi i \xi_a \uh(\xi),
\ee
and the Plancheral identity connects the $L^2$ norms between the function and its Fourier transform
\be
\| u \|_{L^2(\RR^3)}
= \big\| \uh \big\|_{L^2(\RR^3)}.
\ee

\subsection{Estimates for commutators and null forms}

\paragraph{Estimates for commutators}

We first demonstrate some well-known results regarding the commutators of different vector fields, which can be found in \cite{Sogge, PLF-YM-book}.

\begin{lemma}\label{lem:comm-est11}
Let $u$ be a sufficiently regular function with support $\Kcal$ and denote the commutator by $[\Gamma, \Gamma'] = \Gamma \Gamma'- \Gamma'\Gamma$, then we have
\be 
\aligned
\big|[\del_\alpha, L_a] u \big| + \big| [\del_\alpha, \Omega_{ab}] u \big|
&\lesssim
\sum_\beta \big| \del_\beta u \big|,
\\
\big| [L_c, \Omega_{ab}] u \big| + \big| [L_a, L_b] u \big|
&\lesssim
\sum_{d} \big| L_d u \big|,
\\
\big| [L_a, (s/t)] u \big|
&\lesssim
\big| (s/t) u \big|,
\\
\big| [L_b L_a, (s/t)] u \big| 
&\lesssim
\big| (s/t) u \big| + \sum_c \big| (s/t) L_c u \big|,
\\
\big| [\underdel_a, L_b] u \big|
&\lesssim
\sum_c \big| \underdel_c u \big|.
\endaligned
\ee
\end{lemma}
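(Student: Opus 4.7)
The plan is to prove each estimate by direct computation, using the Leibniz rule together with the explicit coordinate expressions for the vector fields. The key geometric input throughout is that on the cone $\Kcal$ one has $|x| \leq t$, so that coefficients of the form $x_a/t$ are uniformly bounded by $1$ and can be absorbed into the $\lesssim$ constant.

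For the first two inequalities, I would expand the commutators term by term. For instance, writing $L_a = x_a \del_t + t \del_a$ and applying the Leibniz rule gives $[\del_t, L_a] u = \del_a u$ and $[\del_b, L_a] u = \delta_{ab} \del_t u$; the rotation case $[\del_\alpha, \Omega_{ab}]$ is completely analogous. For the second group, a similar computation yields $[L_a, L_b] u = \Omega_{ab} u$ and $[L_c, \Omega_{ab}] u = \delta_{ac} L_b u - \delta_{bc} L_a u$ (up to sign). The only nontrivial step is re-expressing $\Omega_{ab}$ in terms of the Lorentz boosts: using $t \del_a = L_a - x_a \del_t$ one finds
\be
\Omega_{ab} = \frac{x_a L_b - x_b L_a}{t},
\ee
and since $|x_a|/t \leq 1$ on $\Kcal$, this delivers the desired bound by $\sum_d |L_d u|$.

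For the commutators with the multiplier $s/t$, I would first compute, from $s^2 = t^2 - |x|^2$, that $\del_t s = t/s$ and $\del_a s = -x_a/s$. A short calculation then gives
\be
L_a (s/t) = x_a \del_t(s/t) + t \del_a (s/t) = -\frac{x_a s}{t^2},
\ee
so $[L_a, s/t] u = L_a(s/t) \cdot u = -(x_a s /t^2) u$, bounded in absolute value by $|(s/t) u|$ since $|x_a|/t \leq 1$. The second-order estimate follows by iteration: write
\be
[L_b L_a, s/t] = L_b\,[L_a, s/t] + [L_b, s/t]\,L_a,
\ee
and apply the first-order bound to each piece. The first term produces a contribution of the form $L_b\bigl((x_a s/t^2) u\bigr)$, which after Leibniz splits into a coefficient-times-$u$ piece (again bounded by $|(s/t) u|$ after computing $L_b(x_a s/t^2)$) and a coefficient-times-$L_b u$ piece (bounded by $|(s/t) L_b u|$). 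The second term contributes $|(s/t) L_a u|$. Summing gives the stated bound.

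Finally, for the semi-hyperboloidal commutator I would expand $\underdel_a = \del_a + (x_a/t)\del_t$ and compute directly, which after cancellation yields the clean identity
\be
[\underdel_a, L_b] u = \frac{x_a}{t}\,\underdel_b u,
\ee
immediately giving the claim via $|x_a|/t \leq 1$. There is no real obstacle here; the whole lemma is a matter of careful bookkeeping. The one spot deserving attention is the second-order $[L_b L_a, s/t]$ estimate, where one must verify that $L_b$ applied to the coefficient $x_a s /t^2$ produces a coefficient bounded by a constant times $s/t$ rather than something larger, which again relies on $|x|\leq t$ inside $\Kcal$.
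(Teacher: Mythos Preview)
Your proposal is correct; each commutator computation checks out, including the clean identities $\Omega_{ab} = (x_a L_b - x_b L_a)/t$, $L_a(s/t) = -x_a s/t^2$, and $[\underdel_a, L_b] u = (x_a/t)\underdel_b u$. The paper does not actually supply a proof of this lemma---it simply cites \cite{Sogge, PLF-YM-book} as standard references---so your direct-computation approach is exactly the intended one.
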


Next, we recall the following result from \cite{Sogge}, which tells us that the null forms acted by a vector field still give us null forms.

\begin{lemma}\label{lem:commu-null}
For all nice functions $u, w$ we have
\be
\aligned
\del_\alpha Q_0 (u, w)
- Q_0 (\del_\alpha u, w) - Q_0 (u, \del_\alpha w)
& = 0,
\\
\del_\gamma Q_{\alpha \beta} (u, w)
- Q_{\alpha \beta} (\del_\gamma u, w) - Q_{\alpha \beta} (u, \del_\gamma w)
&= 0,
\\
L_a Q_0 (u, w)
- Q_0 (L_a u, w) - Q_0 (u, L_a w)
&= 0,
\\
\big| L_a Q_{\alpha \beta} (u, w) 
- Q_{\alpha \beta} (L_a u, w) - Q_{\alpha \beta} (u, L_a w) \big|
&\leq \sum_{\alpha', \beta'} \big| Q_{\alpha' \beta'} (u, w) \big|.
\endaligned
\ee
\end{lemma}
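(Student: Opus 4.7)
The first two identities follow immediately from the Leibniz rule together with the fact that partial derivatives commute. Writing $Q_0(u,w) = \eta^{\alpha\beta}\partial_\alpha u\,\partial_\beta w$, one has $\partial_\gamma Q_0(u,w) = \eta^{\alpha\beta}(\partial_\gamma\partial_\alpha u\,\partial_\beta w + \partial_\alpha u\,\partial_\gamma\partial_\beta w) = Q_0(\partial_\gamma u,w) + Q_0(u,\partial_\gamma w)$ since $[\partial_\gamma,\partial_\alpha]=0$. The same computation with $Q_{\alpha\beta} = \partial_\alpha u\,\partial_\beta w - \partial_\beta u\,\partial_\alpha w$ gives the second identity.

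For the third identity, I would first compute the commutators of $\partial_\alpha$ with $L_a = x_a\partial_t + t\partial_a$: a direct calculation yields $[\partial_0, L_a] = \partial_a$ and $[\partial_b, L_a] = \delta_{ab}\partial_0$. Applying $L_a$ to $Q_0(u,w)$ via the Leibniz rule and writing $L_a\partial_\alpha = \partial_\alpha L_a - [\partial_\alpha, L_a]$, I would obtain
\[
L_a Q_0(u,w) - Q_0(L_a u,w) - Q_0(u,L_a w) = -\eta^{\alpha\beta}\bigl([\partial_\alpha,L_a]u\,\partial_\beta w + \partial_\alpha u\,[\partial_\beta,L_a]w\bigr).
\]
Substituting the commutator formulas, the first piece contracts to $-\partial_a u\,\partial_0 w + \partial_0 u\,\partial_a w$ and the second piece to $-\partial_0 u\,\partial_a w + \partial_a u\,\partial_0 w$; these cancel exactly, giving the stated equality. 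This cancellation is precisely the reason $Q_0$ is invariant under Lorentz boosts.

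For the final identity, I would carry out the same differentiation, obtaining
\[
L_a Q_{\alpha\beta}(u,w) - Q_{\alpha\beta}(L_a u,w) - Q_{\alpha\beta}(u, L_a w) = -[\partial_\alpha,L_a]u\,\partial_\beta w + [\partial_\beta,L_a]u\,\partial_\alpha w - \partial_\alpha u\,[\partial_\beta,L_a]w + \partial_\beta u\,[\partial_\alpha,L_a]w.
\]
Unlike the $Q_0$ case, there is no contraction against $\eta^{\alpha\beta}$ to produce a cancellation; instead I would group the four terms in pairs sharing the same $u$-factor or $w$-factor. Since $[\partial_\gamma, L_a]$ is always a single partial derivative $\partial_\mu$ (with $\mu$ one of $0, a, b$), each such pair takes the form $\partial_\mu u\,\partial_\nu w - \partial_\nu u\,\partial_\mu w = Q_{\mu\nu}(u,w)$, so the whole remainder is a finite sum of null forms $Q_{\alpha'\beta'}(u,w)$, yielding the claimed pointwise bound.

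The only mildly tricky step is the bookkeeping in the last identity: one needs to check that after substituting the explicit formulas for $[\partial_\alpha, L_a]$, each leftover bilinear term in $\partial u$ and $\partial w$ genuinely pairs with another to form an antisymmetric combination $Q_{\mu\nu}$ rather than an uncontrolled product of derivatives. This is a finite case check over the indices $\alpha,\beta \in \{0,1,2,3\}$ and presents no real conceptual difficulty once the commutators are in hand.
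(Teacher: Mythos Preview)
Your proof is correct and is the standard direct computation via the commutator formulas $[\del_0,L_a]=\del_a$, $[\del_b,L_a]=\delta_{ab}\del_0$; the paper itself does not supply a proof but simply recalls the result from Sogge's book, where exactly this argument appears. One cosmetic remark: in your third-identity computation the two intermediate pieces carry the opposite signs from what you wrote (the first piece is $\del_a u\,\del_0 w-\del_0 u\,\del_a w$ and the second is its negative), but the cancellation---and hence the conclusion---is unaffected.
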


\paragraph{Estimates for null forms}

We first recall the classical estimates for null forms of the type $Q_{\alpha \beta}$, which can be found in \cite{Sogge}.

\begin{lemma}\label{lem:est-null-f}
We have for sufficiently regular functions $u, w$ with support in $\Kcal = \{ (t, x) : t \geq t_0, t \geq |x| + 1 \}$
\be 
\big| Q_{\alpha \beta} (u, w) \big|
\lesssim
{1\over t} \big( |L u | |\del w| + | \del u| |L w|  \big),
%\qquad
%\Gamma \in V = \{\del_\alpha, \Omega_{ab}, L_a  \}.
\ee

\end{lemma}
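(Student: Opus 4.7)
The plan is to prove the estimate by re-expressing every spatial partial derivative through the semi-hyperboloidal frame and exploiting the antisymmetry of $Q_{\alpha\beta}$. Recall from earlier in the section that $\del_a = \underdel_a - (x_a/t)\del_t$ with $\underdel_a = L_a/t$; I would substitute this identity into $Q_{\alpha\beta}(u,w) = \del_\alpha u \, \del_\beta w - \del_\beta u \, \del_\alpha w$ and split into cases according to whether the indices are temporal or spatial.

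In the case where exactly one of $\alpha,\beta$ vanishes, say $\alpha=0,\beta=a$, direct substitution gives
\[
Q_{0a}(u,w) = \del_t u\,\underdel_a w - \underdel_a u\,\del_t w
= \tfrac{1}{t}\bigl(\del_t u \cdot L_a w - L_a u \cdot \del_t w\bigr),
\]
since the two terms proportional to $(x_a/t)\del_t u\,\del_t w$ cancel; this is already in the stated form. In the case where both indices are spatial, say $\alpha=a,\beta=b$, the expansion produces three kinds of pieces: $\underdel_a u\,\underdel_b w - \underdel_b u\,\underdel_a w$, mixed terms of the form $(x_a/t)\del_t u\,\underdel_b w$ and its symmetric partners, and a "diagonal" piece $(x_a x_b/t^2)\del_t u\,\del_t w$ which cancels by the antisymmetry $a\leftrightarrow b$. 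Every surviving term carries at least one semi-hyperboloidal derivative $\underdel_c = L_c/t$, hence at least one explicit factor of $1/t$ times an $L$-derivative.

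To collapse the bound into the claimed form, I would use the two geometric facts available on the support cone $\Kcal$: first, $|x|\leq t$, which absorbs the coefficients $x_a/t$ into harmless constants; second, the pointwise inequality $|L_c w|/t = |\underdel_c w| \leq |\del_a w| + |\del_t w| \lesssim |\del w|$, which lets us trade the most dangerous residual term $t^{-2}|Lu||Lw|$ for $t^{-1}|Lu||\del w|$. Collecting all surviving terms then yields the target bound.

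The calculation is essentially bookkeeping; the only delicate point is making the two cancellations (in Case 1 the $(x_a/t)\del_t u\,\del_t w$ pair, in Case 2 the $(x_a x_b/t^2)\del_t u\,\del_t w$ pair) transparent before estimating, because otherwise one is tempted to bound the $(x/t)\del_t u\,\del_t w$ contributions by $|\del u||\del w|$ with no gain of $1/t$. Once the antisymmetry is used up front, no genuine analytic obstacle remains.
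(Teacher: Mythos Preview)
Your argument is correct. The paper does not supply its own proof of this lemma; it merely records the estimate as classical and refers to \cite{Sogge}. The approach you outline---rewriting $\del_a = t^{-1}L_a - (x_a/t)\del_t$, letting the antisymmetry of $Q_{\alpha\beta}$ kill the pure $\del_t u\,\del_t w$ contributions in each case, and then absorbing the residual $t^{-2}|Lu||Lw|$ via $|\underdel_c w|\lesssim |\del w|$ on $\Kcal$---is exactly the standard derivation one finds in that reference, so there is nothing to compare.
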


Besides, following from \cite{PLF-YM-cmp} of the hyperboloidal setting, we also have the following estimates for all types of null forms.

\begin{lemma}\label{lem:est-null-h}
It holds for smooth functions $u, w$ with support in $\Kcal = \{ (t, x) : t \geq t_0, t \geq |x| + 1 \}$ that
\be
\big| Q_0 (u, w) \big|
+ \big| Q_{\alpha \beta} (u, w) \big|
\lesssim
(s/t)^2 |\del_t u \del_t w| + \sum_{a, \alpha} \big( |\underdel_\alpha u \underdel_a w | + |\underdel_\alpha w \underdel_a u | \big).
\ee
\end{lemma}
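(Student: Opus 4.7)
The plan is to rewrite both null forms in the semi-hyperboloidal frame $\{\underdel_0, \underdel_a\}$ and exploit the identity $\del_a = \underdel_a - (x_a/t)\del_t$ to isolate the single ``bad'' direction $\del_t\del_t$, where the expected coefficient $(s/t)^2 = 1 - |x|^2/t^2$ should appear for free.

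First I would treat $Q_0(u,w) = -\del_t u\,\del_t w + \sum_a \del_a u\,\del_a w$. Substituting $\del_a = \underdel_a - (x_a/t)\del_t$ and expanding, the pure $\del_t\del_t$ contribution collects into
\[
\Bigl(-1 + \tfrac{|x|^2}{t^2}\Bigr)\del_t u\,\del_t w = -\tfrac{s^2}{t^2}\del_t u\,\del_t w,
\]
while every remaining term carries at least one factor of some $\underdel_a u$ or $\underdel_a w$; combined with the cross terms $\sum_a \underdel_a u\,\underdel_a w$ and the bound $|x_a/t|\le 1$ on $\Kcal$, this is exactly the right-hand side of the lemma.

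Next I would handle $Q_{\alpha\beta}$ by splitting into the two cases $(\alpha,\beta)=(0,a)$ and $(\alpha,\beta)=(a,b)$. For $Q_{0a}$, substituting $\del_a = \underdel_a - (x_a/t)\del_t$ makes the $(x_a/t)\del_t u\,\del_t w$ pieces cancel identically, leaving $\del_t u\,\underdel_a w - \underdel_a u\,\del_t w$, which already has the required structure. For $Q_{ab}$, the same substitution in $\del_a u\,\del_b w - \del_b u\,\del_a w$ causes the $(x_a x_b/t^2)\del_t u\,\del_t w$ contributions to cancel by antisymmetry, so every remaining term is a product in which at least one factor is some $\underdel_c u$ or $\underdel_c w$ (multiplied by a bounded coefficient $x_a/t$ or $x_b/t$). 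Bounding each such product by $|\underdel_\alpha u\,\underdel_c w| + |\underdel_\alpha w\,\underdel_c u|$ and summing completes the estimate.

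The only mildly delicate point is to keep the bookkeeping clean in the $Q_{ab}$ expansion so that one sees the cancellation of the $\del_t\del_t$ coefficient and verifies that no term without a $\underdel$ factor survives; this is pure algebra and the main obstacle is notational rather than conceptual. In particular, $Q_0$ is the only place where the $(s/t)^2$ weight arises, because $Q_{\alpha\beta}$ has no $\del_t u\,\del_t w$ contribution at all after the semi-hyperboloidal change of frame.
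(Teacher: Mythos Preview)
Your proposal is correct and follows essentially the same approach as the paper: rewrite the null forms in the semi-hyperboloidal frame via $\del_a = \underdel_a - (x_a/t)\del_t$ and observe that the $\del_t u\,\del_t w$ coefficient in $Q_0$ collapses to $-(s/t)^2$ while all other terms carry a $\underdel_a$ factor, then use $|x|/t \le 1$ on $\Kcal$. The paper in fact only writes out the $Q_0$ case explicitly and refers to the literature for $Q_{\alpha\beta}$, so your treatment of $Q_{0a}$ and $Q_{ab}$ (noting the cancellation of the $\del_t\del_t$ contribution by antisymmetry) is more detailed than what the paper provides but entirely in the same spirit.
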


\begin{proof}
We revisit the proof for $Q_0 (u, w)$ only, from \cite{PLF-YM-book}, for readers who are not familiar with the hyperboloidal foliation method.

Recall the semi-hyperboloidal frame 
$$
\del_t = \underdel_0,
\qquad
\del_a = -{x_a \over t} \underdel_0 + \underdel_a,
$$
and we express the null form $Q_0 (u, w)$ in the semi-hyperboloidal frame to get
$$
Q_0 (u, w)
=
- {s^2 \over t^2} \underdel_0 u \underdel_0 w - {x_a \over t} (\underdel_0 u \underdel^a w + \underdel_0 w \underdel^a u ) + \underdel_a u \underdel^a w.
$$
Then the fact $|x| \leq t$ concludes the estimates.

\end{proof}

\subsection{Sobolev--type inequalities}

\paragraph{Klainerman-Sobolev inequality}

In order to obtain pointwise decay estimates for the Klein-Gordon components, we need the following Klainerman-Sobolev inequality, which was introduced in \cite{Klainerman2}. The reason why we need the following version of Klainerman-Sobolev inequality is that one will not need to rely on the scaling vector field $L_0 = t \del_t + x^a \del_a$ (which is not consistent with the Klein-Gordon equations), and this feature is vital in obtaining the mass independent pointwise decay results for the Klein-Gordon components.

\begin{proposition}\label{prop:K-S}
Assume $u = u(t, x)$ is a sufficiently smooth function which decays sufficiently fast at space infinity for each fixed $t \geq 2$,
then for any $t \geq 2$, $x \in \RR^3$, we have
\bel{eq:Va-K-S}
|u(t, x)|
\lesssim t^{-1} \sup_{0\leq t' \leq 2t, |I| \leq 3} \big\| \Gamma^I u \big\|_{L^2(\RR^3)},
\qquad
\Gamma \in V = \{ L_a, \del_\alpha, \Omega_{ab} = x^a \del_b - x^b \del_a \}.
\ee
\end{proposition}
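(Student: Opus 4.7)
The plan is a Klainerman-Sobolev type estimate obtained via a rescaled spatial Sobolev embedding, adapted so as to avoid the scaling vector field $L_0 = t\del_t + x^a\del_a$ (which is not in $V$ because it fails to commute with $\Box + m^2$). The key algebraic identity is $t\del_a = L_a - x_a\del_t$, which lets one trade a spatial derivative $\del_a$ for a Lorentz boost $L_a\in V$, gaining a factor $t^{-1}$ at the price of an uncontrolled $\del_t$-piece weighted by $x_a$.

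First I would fix $t\geq 2$ and apply the rescaled $H^2(\RR^3)\hookrightarrow L^\infty$ Sobolev embedding on the spatial ball $B_{t/4}(x)$, which gives
\[
|u(t,x)|^2 \lesssim \sum_{|\alpha|\leq 2} t^{2|\alpha|-3}\,\|\del^\alpha u(t,\cdot)\|_{L^2(B_{t/4}(x))}^2.
\]
Then I would iterate the identity $t\del_a = L_a - x_a\del_t$ to convert weighted spatial derivatives into $\Gamma$-derivatives. On $B_{t/4}(x)$ contained in the cone $\{|y|\lesssim t\}$ (which is where the relevant functions are supported in the applications) the coefficients $|y_a|/t$ are bounded, so the iteration yields the pointwise bound $t^{|\alpha|}|\del^\alpha u(t,x)| \lesssim \sum_{|I|\leq|\alpha|}|\Gamma^I u(t,x)|$ with harmless commutator terms. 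Plugging this back into the Sobolev inequality gives $|u(t,x)| \lesssim t^{-3/2}\sum_{|I|\leq 2}\|\Gamma^I u(t,\cdot)\|_{L^2}$, which is in fact \emph{stronger} than the stated bound; the proposition's weaker form follows by relaxing $t^{-3/2}$ to $t^{-1}$ and $|I|\leq 2$ to $|I|\leq 3$ together with the supremum over $t'\in[0,2t]$.

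The main obstacle is the region $|x|\gtrsim t$, where $|x_a|/t$ is no longer controlled by a small constant and the $\del_t$-term in $t\del_a = L_a - x_a\del_t$ does not gain any $t^{-1}$ factor; consequently the fixed-time argument above degrades. This is precisely where the supremum over $t' \in [0,2t]$ in the statement becomes essential: it permits one to invoke a four-dimensional Sobolev embedding $H^3(\RR^4)\hookrightarrow L^\infty$ on a spacetime slab around $(t,x)$ of width $\sim t$, effectively trading one derivative in the time direction for an extra order of Sobolev smoothness, so that the residual $\del_t$-pieces can be absorbed and the spatial decay of $u$ at infinity handles the far-field. This trade-off explains the comparatively weaker stated decay rate $t^{-1}$ and the upper bound $|I|\leq 3$ rather than $|I|\leq 2$.
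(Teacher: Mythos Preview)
The paper does not give a proof of this proposition; it merely cites Klainerman's 1985 paper \cite{Klainerman2} and then immediately passes to a corollary (Proposition~\ref{prop:K-S2}) for functions supported in the cone. So there is no ``paper's own proof'' to compare against, and your sketch has to stand on its own.

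Unfortunately there is a genuine gap. Your key pointwise claim
\[
t^{|\alpha|}\,|\del_x^\alpha u(t,y)| \;\lesssim\; \sum_{|I|\leq |\alpha|}|\Gamma^I u(t,y)|
\quad\text{for }|y|\lesssim t
\]
is false. Iterating $t\del_a = L_a - y_a\del_t$ twice gives
\[
t^2\del_a\del_b u
= L_aL_b u - y_a\,\del_tL_b u - t\delta_{ab}\,\del_t u - y_b\,\del_tL_a u + y_b\,\del_a u + y_ay_b\,\del_t^2 u,
\]
and the last term has coefficient $y_ay_b$ of size $\sim t^2$, not $O(1)$. Boundedness of $|y_a|/t$ tells you only that the coefficient of $\del_t^2 u$ in $\del_a\del_b u$ is $O(1)$, which is useless: it leaves $t\|\del_t^2 u\|_{L^2}^2$ in the rescaled Sobolev estimate and produces \emph{growth} $t^{1/2}$, not decay. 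In fact the fixed-time inequality $|u(t,x)|\lesssim t^{-3/2}\sum_{|I|\leq 2}\|\Gamma^I u(t,\cdot)\|_{L^2}$ with $\Gamma\in V$ (no scaling) is impossible: take an outgoing spherical wave $u(t,x)=\phi(t-|x|)\psi(\omega)$ with $\phi$ a fixed bump; then $|u(t,(t-1)\omega_0)|\sim 1$ while every $\|\Gamma^I u(t,\cdot)\|_{L^2}$ is only $\sim t$, so the right-hand side is $\sim t^{-1/2}\to 0$. This is exactly why the full Klainerman--Sobolev inequality needs the scaling field $L_0$ to produce the extra $(1+|t-r|)^{1/2}$ factor.

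Your instinct that the supremum over $t'\in[0,2t]$ is doing real work is correct, but the one-line ``use $H^3(\RR^4)\hookrightarrow L^\infty$ on a slab of width $\sim t$'' does not fix the problem: rescaling the time direction by $t$ reintroduces the uncontrolled $t\del_t$. A correct argument (as in Klainerman's original paper or H\"ormander's treatment) exploits the Lorentz invariance more carefully --- for instance by boosting the point $(t,x)$ to the time axis, or equivalently by working on hyperboloids where the boosts $L_a$ alone form a spanning frame for the tangential directions --- rather than by a naive isotropic spacetime rescaling.
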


We will use a simplified version of Proposition \ref{prop:K-S}, where we do not need to use the rotation vector field because we only need to consider function supported in $\Kcal = \{(t, x) : t \geq 2, t \geq |x| + 1 \}$.

\begin{proposition}\label{prop:K-S2}
Assume $u = u(t, x)$ is a sufficiently smooth function with support $\Kcal$,
then for any $t \geq 2$, $x \in \RR^3$, we have
\bel{eq:Va-K-S2}
|u(t, x)|
\lesssim t^{-1} \sup_{t_0\leq t' \leq t_0 + 2t, |I| + |J| \leq 3} \big\| \del^I L^J u \big\|_{L^2(\RR^3)}.
\ee
\end{proposition}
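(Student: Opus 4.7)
My plan is to deduce Proposition \ref{prop:K-S2} from the full Klainerman--Sobolev inequality in Proposition \ref{prop:K-S} by eliminating the rotation vector fields $\Omega_{ab}$ in favour of Lorentz boosts $L_a$ and translations $\del_\alpha$, which is possible because the function $u$ is supported in the interior cone $\Kcal$. The starting point is the algebraic identity
$$
\Omega_{ab} = \frac{x_a}{t} L_b - \frac{x_b}{t} L_a,
$$
obtained by solving $L_a = x_a \del_t + t \del_a$ for $\del_a$ and substituting into $\Omega_{ab} = x_a \del_b - x_b \del_a$. On $\Kcal$ we have $|x|/t \leq 1$, and a direct computation shows that each application of $\del$ or $L$ to a coefficient of the form $x^{I}/t^{|I|}$ produces only terms that are again $O(1)$ on $\Kcal$ (for instance, $L_c(x_a/t) = \delta_{ac} - x_a x_c / t^2$).

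Next I would argue by induction on $|I|$ (only $|I| \leq 3$ is needed) that for every $\Gamma^I$ with $\Gamma \in V = \{\del_\alpha, L_a, \Omega_{ab}\}$, one can rewrite
$$
\Gamma^I u = \sum_{|J| + |K| \leq |I|} \phi_{JK}(t, x)\, \del^J L^K u,
$$
with smooth coefficients $\phi_{JK}$ satisfying $|\phi_{JK}(t, x)| \lesssim 1$ on $\Kcal$. The base case is the identity for $\Omega_{ab}$ above. The inductive step uses the Leibniz rule together with the commutator estimates in Lemma \ref{lem:comm-est11}: each $\Gamma$ either hits one of the $\phi_{JK}$ (which preserves the $O(1)$ bound by the coefficient computation) or hits $\del^J L^K u$ and produces $\del^{J'} L^{K'} u$ with $|J'| + |K'| \leq |J| + |K| + 1$, modulo strictly lower-order correction terms from $[L, \del]$, $[L, L]$, etc., all of which are already of the desired form.

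Since $u$ is supported in $\Kcal$, so is $\Gamma^I u$, and the pointwise bound on $\phi_{JK}$ transfers to the slice $L^2$ norm:
$$
\|\Gamma^I u(t', \cdot)\|_{L^2(\RR^3)} \lesssim \sum_{|J| + |K| \leq |I|} \|\del^J L^K u(t', \cdot)\|_{L^2(\RR^3)}.
$$
Applying Proposition \ref{prop:K-S} and noting that the supremum there is over $0 \leq t' \leq 2t$, which (once intersected with the support condition $t' \geq t_0$) is contained in $[t_0, t_0 + 2t]$, yields \eqref{eq:Va-K-S2}.

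The main technical obstacle is the bookkeeping in the inductive step: one must confirm that every time a vector field lands on a coefficient $\phi_{JK}$ the result is still uniformly bounded on $\Kcal$, and that the commutator corrections from Lemma \ref{lem:comm-est11} do not introduce $L_0$-like objects or factors growing in $t$. Both issues are resolved by the identity $|x|/t \leq 1$ on $\Kcal$ and by the fact that the commutators listed in Lemma \ref{lem:comm-est11} remain within the family $\{\del, L, \Omega\}$, so the induction closes.
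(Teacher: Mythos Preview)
Your proposal is correct and follows essentially the same approach as the paper: the paper's proof is a two-line sketch that invokes Proposition \ref{prop:K-S}, the commutator estimates, and the pointwise bound $\sum_{ab}|\Omega_{ab} w| \lesssim \sum_a |L_a w|$ for functions supported in $\Kcal$, which is precisely your identity $\Omega_{ab} = (x_a/t) L_b - (x_b/t) L_a$. Your inductive bookkeeping simply makes explicit what the paper leaves to the reader.
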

\begin{proof}
The Klainerman-Sobolev inequality \eqref{eq:Va-K-S2} can be obtained from \eqref{eq:Va-K-S}, the commutator estimates, and the fact that 
$$
\sum_{ab} | \Omega_{ab} w |
\lesssim \sum_a | L_a w |
$$
holds for all nice functions $w$ with support $\Kcal$.

\end{proof}

\paragraph{Sobolev-type inequality on hyperboloids}

We now recall a Sobolev-type inequality adapted to the hyperboloids from \cite{PLF-YM-book}, which allows us to get the (mass dependent) sup-norm estimates for the Klein-Gordon components.

\begin{proposition} \label{prop:sobolev}
Let $u= u(t, x)$ be a sufficiently nice function with support $\{(t, x): t \geq |x| + 1\}$, then for all  $s \geq 2$, one has 
\bel{eq:Sobolev2}
\sup_{\Hcal_s} \big| t^{3/2} u(t, x) \big|  
\lesssim \sum_{| J |\leq 2} \| L^J u \|_{L^2_f(\Hcal_s)},
\ee
where the symbol $L$ denotes the Lorentz boosts. 
\end{proposition}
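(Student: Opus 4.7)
The plan is to reduce to the standard three-dimensional Sobolev embedding $H^2(\RR^3) \hookrightarrow L^\infty(\RR^3)$ by parametrizing each hyperboloid as a graph over $\RR^3$ and rescaling to unit size around an arbitrary point. Concretely, write $\tilde u_s(\bar x) := u(\sqrt{s^2+|\bar x|^2},\bar x)$ for the restriction of $u$ to $\Hcal_s$ in terms of the spatial variable $\bar x \in \RR^3$. The support hypothesis $t \geq |x|+1$ forces $|\bar x| \leq (s^2-1)/2$, so $\tilde u_s$ is compactly supported on $\RR^3$. A direct chain-rule computation gives
\[
\partial_{\bar x^a}\tilde u_s(\bar x) \;=\; \partial_a u + \frac{\bar x^a}{t}\,\partial_t u \;=\; \frac{L_a u}{t}\bigg|_{\Hcal_s},
\]
so Euclidean derivatives in $\bar x$ correspond precisely to $t^{-1}L_a$ acting on $u$.

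Next, fix a point $(t_0,x_0)\in \Hcal_s$ and a small absolute constant $\lambda$ (for concreteness $\lambda = 1/6$), and introduce the rescaled variable $y = (\bar x - x_0)/(\lambda t_0)$. The elementary identity
\[
t^2 - t_0^2 \;=\; |\bar x|^2 - |x_0|^2 \;=\; (\bar x - x_0)\cdot(\bar x + x_0),
\]
together with $|x_0|\leq t_0$ (which holds on $\Hcal_s$), immediately yields $t \sim t_0$ uniformly on $\{|y|\leq 1\}$. Let $\chi$ be a fixed smooth cut-off supported in $\{|y|\leq 1\}$ and equal to $1$ near $y=0$, and set $w(y) := \chi(y)\,\tilde u_s(x_0 + \lambda t_0\, y)$. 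Then $w(0) = u(t_0,x_0)$, and by iterating the identity above and applying the Leibniz rule to $\chi$, one obtains, for $|\alpha|\leq 2$,
\[
\partial_y^\alpha w(y) \;=\; \sum_{|\beta|\leq |\alpha|} c^\alpha_\beta(y)\,(L^\beta u)(t,x),
\]
with coefficients $c^\alpha_\beta$ uniformly bounded in $y$: each $\partial_{\bar x^a}$ produces a factor $\lambda t_0 \cdot t^{-1}\sim \lambda$, and commutators of the operators $t^{-1}L_a$ with themselves contribute only lower-order $L^\beta u$ terms, bounded by the same mechanism.

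Finally, applying the standard 3D Sobolev inequality $|w(0)|^2 \lesssim \|w\|_{H^2(\RR^3)}^2$ and changing variables back via $d\bar x = (\lambda t_0)^3\,dy$, I obtain
\[
|u(t_0,x_0)|^2 \;\lesssim\; \sum_{|\alpha|\leq 2}\int_{|y|\leq 1}|\partial_y^\alpha w|^2\,dy \;\lesssim\; t_0^{-3}\sum_{|\beta|\leq 2}\int_{\Hcal_s}|L^\beta u|^2\,d\bar x,
\]
which, after multiplying through by $t_0^3$ and taking the supremum over $(t_0,x_0)\in\Hcal_s$, is exactly \eqref{eq:Sobolev2}. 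The only real obstacle is verifying that the coefficients $c^\alpha_\beta$ remain uniformly bounded; this reduces to the pointwise bound $t\sim t_0$ on the shifted ball and to controlling commutators of the vector fields $t^{-1}L_a$, both of which follow readily from $|x_0|\leq t_0$ and the commutator estimates of Lemma \ref{lem:comm-est11}. The proof is thus essentially a careful change-of-variables argument combined with Sobolev embedding on $\RR^3$.
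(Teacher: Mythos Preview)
Your argument is correct and complete. The paper does not supply its own proof of this proposition; it simply recalls the inequality from \cite{PLF-YM-book}. Your approach---parametrize $\Hcal_s$ as a graph over $\RR^3$, identify $\del_{\bar x^a}$ with $t^{-1}L_a$, rescale by $\lambda t_0$ around an arbitrary point to reduce to the standard $H^2\hookrightarrow L^\infty$ embedding on the unit ball, and recover the factor $t_0^{-3}$ from the Jacobian---is exactly the argument given in that reference, so there is nothing to compare. One minor remark: you do not actually need the commutator estimates of Lemma~\ref{lem:comm-est11} here, since iterating the identity $\del_{\bar x^a}(\,\cdot\,)=t^{-1}L_a(\,\cdot\,)$ directly gives $\del_{\bar x^b}\del_{\bar x^a}\tilde u_s=-x_b t^{-3}L_a u+t^{-2}L_bL_a u$, whose coefficients are already bounded after the $(\lambda t_0)^2$ scaling by $t\sim t_0$ and $|x|\leq t$.
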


The Sobolev inequality \eqref{eq:Sobolev2} combined with the commutator estimates gives us the following inequality
\be
\sup_{\Hcal_s} \big| s \hskip0.03cm t^{1/2} u(t, x) \big|  
\lesssim \sum_{| J |\leq 2} \| (s/t) L^J u \|_{L^2_f(\Hcal_s)}.
\ee

\paragraph{Hardy inequality on hyperboloids}

\begin{proposition}
Let $u = u(x)$ be a sufficiently smooth function in dimension $d \geq 3$, then it holds ($r = |x|$)
\be 
\big\| r^{-1} u \big\|_{L^2(\RR^d)}
\leq 
C \sum_a \big\| \del_a \big\|_{L^2(\RR^d)}.
\ee
\end{proposition}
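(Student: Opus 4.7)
The proposition is the classical Hardy inequality in $\RR^d$ with $d \geq 3$ (despite the section heading, the statement as written is on a flat slice, not on a hyperboloid). The plan is to prove it by a single integration by parts against the radial vector field, which is the shortest and most standard route and requires only the algebraic identity $\div(x / r^2) = (d-2)/r^2$.

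First I would verify this identity directly:
\be
\del_a\!\left(\frac{x_a}{r^2}\right) = \frac{\delta_{aa}}{r^2} - \frac{2 x_a^2}{r^4},
\qquad
\sum_{a=1}^{d} \del_a\!\left(\frac{x_a}{r^2}\right) = \frac{d}{r^2} - \frac{2 r^2}{r^4} = \frac{d-2}{r^2},
\ee
which is where the hypothesis $d \geq 3$ enters (so that the constant is positive). Next, for $u \in C_c^\infty(\RR^d \setminus \{0\})$ I would rewrite
\be
(d-2) \int_{\RR^d} \frac{u^2}{r^2}\, dx
= \int_{\RR^d} u^2 \, \div\!\left(\frac{x}{r^2}\right) dx
= -2 \int_{\RR^d} u \, \frac{x \cdot \nabla u}{r^2}\, dx,
\ee
using integration by parts and noting that for compactly supported $u$ there are no boundary terms at infinity, while the singularity at the origin is handled by the standard cut-off and limiting argument (and the restriction to $u$ vanishing near $0$ is removed in the usual way by density and monotone convergence, since $d \geq 3$ makes $r^{-1}$ locally square-integrable away from $u$'s possible values at $0$).

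Then I would apply Cauchy--Schwarz to the right-hand side, using $|x|/r^2 = 1/r$:
\be
(d-2) \left\| \frac{u}{r} \right\|_{L^2}^{2}
\leq 2 \left\| \frac{u}{r} \right\|_{L^2} \| \nabla u \|_{L^2},
\ee
and divide by $\|u/r\|_{L^2}$ (which we can assume nonzero, else the inequality is trivial) to obtain
\be
\left\| \frac{u}{r} \right\|_{L^2(\RR^d)}
\leq \frac{2}{d-2} \, \| \nabla u \|_{L^2(\RR^d)}
\leq C \sum_{a=1}^{d} \| \del_a u \|_{L^2(\RR^d)},
\ee
with $C = 2/(d-2)$, which is the asserted bound (reading the right-hand side of the stated inequality as $\|\del_a u\|_{L^2}$, since the statement is clearly missing the function $u$). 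Finally I would extend to general sufficiently smooth $u$ by a standard density argument.

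There is no real obstacle here; the only minor subtlety is justifying the integration by parts near $r = 0$, which is handled by introducing a radial cut-off $\chi_\varepsilon$ vanishing on $\{r \leq \varepsilon\}$ and letting $\varepsilon \to 0$, using that the boundary contribution on the sphere $\{r = \varepsilon\}$ scales like $\varepsilon^{d-2}\cdot (\text{bounded})$ and vanishes precisely because $d \geq 3$.
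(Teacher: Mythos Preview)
Your proof is correct and is the standard integration-by-parts argument for the classical Hardy inequality. The paper itself states this proposition without proof, treating it as a well-known result (and then moves on to the hyperboloidal version, for which it cites \cite{PLF-YM-book, PLF-YM-cmp}), so there is no paper proof to compare against; your argument supplies exactly the standard justification one would expect.
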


The Hardy inequality can also be adapted to the hyperboloidal setting, see for instance \cite{PLF-YM-book, PLF-YM-cmp}.

\begin{proposition}
Assume the function $u$ is sufficiently regular and supported in the region $\Kcal$, then for all $s\geq 2$, one has 
\bel{eq:Hardy} 
\| r^{-1} u \|_{L^2_f(\Hcal_s)}
\lesssim
\sum_a \| \underdel_a u \|_{L^2_f(\Hcal_s)}.
\ee
As a consequence, we also have
\bel{eq:Hardy2}
\| t^{-1} u \|_{L^2_f(\Hcal_s)}
\lesssim
\sum_a \| \underdel_a u \|_{L^2_f(\Hcal_s)}.
\ee
\end{proposition}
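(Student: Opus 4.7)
The plan is to reduce the hyperboloidal Hardy inequality to the classical Euclidean Hardy inequality in $\RR^3$ via the natural spatial parameterization of $\Hcal_s$. Every point of $\Hcal_s$ has the form $(\sqrt{|x|^2+s^2},\, x)$ with $x \in \RR^3$, so I would set $\tildeu(x) := u(\sqrt{|x|^2+s^2},\, x)$. By the definition of the flat $L^2$ norm on the hyperboloid, $\|u\|_{L^2_f(\Hcal_s)} = \|\tildeu\|_{L^2(\RR^3)}$, and the chain rule gives
$$
\del_a \tildeu(x)
= \bigl(\del_a u + (x_a/t)\del_t u\bigr)\big|_{\Hcal_s}
= (\underdel_a u)\big|_{\Hcal_s},
$$
so that $\|\del_a \tildeu\|_{L^2(\RR^3)} = \|\underdel_a u\|_{L^2_f(\Hcal_s)}$.

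Next, I would observe that the support condition $u \subset \Kcal$ together with the hyperboloid relation $t^2 = |x|^2 + s^2$ forces $\tildeu$ to be spatially supported in the ball $\{|x| \leq (s^2-1)/2\}$, so $\tildeu$ is compactly supported in $\RR^3$. The classical three-dimensional Hardy inequality then applies directly to $\tildeu$ and yields
$$
\|r^{-1}\tildeu\|_{L^2(\RR^3)} \lesssim \sum_a \|\del_a\tildeu\|_{L^2(\RR^3)},
$$
and translating both sides back to the hyperboloid via the identifications above gives \eqref{eq:Hardy}.

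Finally, for \eqref{eq:Hardy2}, the key pointwise observation is that on $\Hcal_s$ one has $t = \sqrt{r^2+s^2} \geq r$, hence $t^{-1} \leq r^{-1}$ almost everywhere (the axis $r=0$ is a null set for the spatial measure). Combining this pointwise bound with \eqref{eq:Hardy} immediately produces \eqref{eq:Hardy2}. The only substantive analytic ingredient is the classical Euclidean Hardy inequality; the rest is bookkeeping of the change of variable between $\Hcal_s$ and $\RR^3$. The main thing to be careful about is verifying the identity $\del_a \tildeu = (\underdel_a u)|_{\Hcal_s}$ and the support reduction, both of which are short direct calculations rather than a real obstacle.
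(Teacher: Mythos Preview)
Your argument is correct: the parameterization $\tildeu(x)=u(\sqrt{s^2+|x|^2},x)$ identifies $\|\cdot\|_{L^2_f(\Hcal_s)}$ with $\|\cdot\|_{L^2(\RR^3)}$, the chain rule gives $\del_a\tildeu=(\underdel_a u)|_{\Hcal_s}$, and the compact support from $\Kcal$ lets you invoke the classical Hardy inequality; the bound $t\geq r$ on $\Hcal_s$ then yields \eqref{eq:Hardy2}. The paper does not supply its own proof but simply cites \cite{PLF-YM-book,PLF-YM-cmp}, where the argument is exactly this reduction to the Euclidean Hardy inequality, so your approach coincides with the intended one.
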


\paragraph{Sobolev embedding theorem}

We recall the following type of Sobolev embedding theorem.

\begin{proposition}
Let $u = u(x) \in L^{6/5}(\RR^3)$, then it holds that
\bel{eq:S-embedding}
\Big\| {u \over \Lambda}  \Big\|_{L^2(\RR^3)} 
\lesssim \| u \|_{L^{6/5}(\RR^3)},
\ee
in which $\Lambda = \sqrt{- \Delta} = \sqrt{- \del_a \del^a}$.
\end{proposition}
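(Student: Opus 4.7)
This is the $L^{6/5}\to L^2$ endpoint of the Hardy--Littlewood--Sobolev inequality: the operator $\Lambda^{-1}=(-\Delta)^{-1/2}$ coincides, up to a universal constant, with the Riesz potential $I_1$ on $\RR^3$, i.e.\ convolution with the kernel $|x|^{-2}$, and the scaling relation $1/2 = 5/6 - 1/3$ exactly matches the expected gain of one derivative in dimension three. The cleanest route is to dualise and reduce to a single well-known Sobolev embedding, rather than prove HLS from scratch.

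The plan is as follows. By Plancherel and the self-adjointness of $\Lambda^{-1}$ in the $L^2$ pairing,
$$
\Big\| {u \over \Lambda} \Big\|_{L^2(\RR^3)}
= \sup_{\| v \|_{L^2(\RR^3)} = 1} \big| \big\la \Lambda^{-1} u, v \big\ra \big|
= \sup_{\| v \|_{L^2(\RR^3)} = 1} \big| \big\la u, \Lambda^{-1} v \big\ra \big|,
$$
so H\"older's inequality with conjugate exponents $(6/5, 6)$ reduces the claim to the dual bound
$$
\big\| \Lambda^{-1} v \big\|_{L^6(\RR^3)}
\lesssim \| v \|_{L^2(\RR^3)}.
$$
I would obtain this dual bound from the critical three-dimensional Sobolev embedding $\dot H^1(\RR^3) \hookrightarrow L^6(\RR^3)$: setting $w = \Lambda^{-1} v$, Plancherel gives $\| \nabla w \|_{L^2} \simeq \| \Lambda w \|_{L^2} = \| v \|_{L^2}$, and then the Gagliardo--Nirenberg--Sobolev inequality in $\RR^3$ yields $\| w \|_{L^6} \lesssim \| \nabla w \|_{L^2}$, combining to give the desired estimate.

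There is essentially no real obstacle, since the argument is a packaging of two standard tools. The only cautionary point worth flagging is that a naive Fourier-side attempt --- rewriting the left-hand side via Plancherel as $\int (2\pi|\xi|)^{-2} |\hat u(\xi)|^2 \, d\xi$ and trying to control it from $\| u \|_{L^{6/5}}$ by Hausdorff--Young alone --- does not close, because the weight $|\xi|^{-2}$ is not locally integrable in three dimensions. This is precisely the technical reason one must pass through Hardy--Littlewood--Sobolev (or, equivalently, the duality reduction to the critical Sobolev embedding above), and it is the step where the compact support hypothesis made earlier in the paper, implying $\| v_{i1} \|_{L^{6/5}} \lesssim \| v_{i1} \|_{L^2}$, becomes crucial for later applications.
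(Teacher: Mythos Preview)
Your proposal is correct. The paper does not actually give a proof of this proposition: it is simply stated in the preliminaries under the heading ``Sobolev embedding theorem'' and introduced with the phrase ``We recall the following type of Sobolev embedding theorem,'' with no argument supplied. So there is no paper proof to compare against.

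Your duality reduction to the critical embedding $\dot H^1(\RR^3)\hookrightarrow L^6(\RR^3)$ is one of the standard clean ways to see this inequality, and your identification of it as the $L^{6/5}\to L^2$ case of Hardy--Littlewood--Sobolev for the Riesz potential $I_1$ in dimension three is exactly right. The remark about why a naive Hausdorff--Young attempt fails is accurate and a nice touch, though not strictly needed for the proof itself.
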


\subsection{Energy estimates for Klein-Gordon equations}

Given a function $u = u(t, x)$ supported in $\Kcal$, we define its energy $\Ecal_m$, following \cite{PLF-YM-book}, on a hyperboloid $\Hcal_s$ by
\bel{eq:2energy} 
\aligned
\Ecal_m(s, u)
&:=
 \int_{\Hcal_s} \Big( \big(\del_t u \big)^2+ \sum_a \big(\del_a u \big)^2+ 2 (x^a/t) \del_t u \del_a u + m^2 u^2 \Big) \, dx
\\
&= \int_{\Hcal_s} \Big( \big( (s/t)\del_t u \big)^2+ \sum_a \big(\underdel_a u \big)^2+ m^2 u^2 \Big) \, dx
\\
&= \int_{\Hcal_s} \Big( \big( \underdel_\perp u \big)^2+ \sum_a \big( (s/t)\del_a u \big)^2+ \sum_{a<b} \big( t^{-1}\Omega_{ab} u \big)^2+ m^2 u^2 \Big) \, dx,
\endaligned
\ee
in which $\underdel_{\perp} := \del_t+ (x^a / t) \del_a$ is the orthogonal vector field. %For simplicity we will denote $E(s, u):= E_0(s, u)$.
The integral $L^2_f(\Hcal_s)$ is defined by
\bel{flat-int}
\| u \|_{L^2_f(\Hcal_s)}^2
:=\int_{\Hcal_s}| u |^2 \, dx 
:=\int_{\RR^3} \big| u(\sqrt{s^2+|x|^2}, x) \big|^2 \, dx.
\ee
We note that it holds
$$
\big\| (s/t) \del u \big\|_{L^2_f(\Hcal_s)} + \sum_a \big\| \underdel_a u \big\|_{L^2_f(\Hcal_s)}
\lesssim
\Ecal_m(s, u)^{1/2},
$$
which will be frequently used.

Next, we demonstrate the energy estimates to the hyperboloidal setting.

\begin{proposition}[Energy estimates for wave-Klein-Gordon equations]
For $m \geq 0$ and for $s \geq s_0$ (with $s_0 = 2$), it holds that
\bel{eq:w-EE} 
\Ecal_m(s, u)^{1/2}
\leq 
\Ecal_m(s_0, u)^{1/2}
+ \int_2^s \big\| -\Box u + m^2 u \big\|_{L^2_f(\Hcal_{s'})} \, ds'
\ee
for all sufficiently regular functions $u$, which are defined and supported in $\Kcal_{[s_0, s]} = \bigcup_{s_0 \leq s'\leq s} \Hcal_{s'}$.
\end{proposition}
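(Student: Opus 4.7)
The plan is a standard multiplier argument tailored to the hyperboloidal slab $\Kcal_{[s_0,s]}$: I would use $\del_t u$ as a multiplier, recognize the result as a pure spacetime divergence, and compute its flux through the hyperboloids $\Hcal_{s'}$. The starting pointwise identity
$$
(-\Box u + m^2 u)\,\del_t u = \del_t\Bigl(\tfrac12\bigl((\del_t u)^2+\sum_a(\del_a u)^2+m^2u^2\bigr)\Bigr)-\sum_a\del_a\bigl(\del_t u\,\del_a u\bigr)
$$
expresses the left-hand side as $\mathrm{div}_{t,x} W$ for the explicit vector field $W=(W^0,W^a)$ with $W^0=\tfrac12((\del_t u)^2+\sum_a(\del_a u)^2+m^2u^2)$ and $W^a=-\del_t u\,\del_a u$, read off from the right.

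Next, I would integrate $(-\Box u+m^2u)\,\del_t u=\mathrm{div}_{t,x}W$ over $\Kcal_{[s_0,s]}$ against $dt\,dx$. The slab has no finite lateral boundary, and the compact spatial support of $u$ handles the asymptotic part at $|x|\to\infty$, so only the fluxes through $\Hcal_{s_0}$ and $\Hcal_s$ contribute. Parameterizing $\Hcal_{s'}$ by $x\in\RR^3$ via $t=\sqrt{s'^2+|x|^2}$, the outward Euclidean unit normal is $(t,-x)/\sqrt{t^2+|x|^2}$ and the Euclidean surface measure is $\sqrt{t^2+|x|^2}/t\,dx$; these combine to identify the flux of $W$ through $\Hcal_{s'}$ as precisely $\tfrac12\Ecal_m(s',u)$ in the first of the equivalent forms \eqref{eq:2energy}. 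Using the coarea relation $dt\,dx=(s'/t)\,ds'\,dx$ and the opposite orientations of the two boundary hyperboloids, this produces the energy identity
$$
\Ecal_m(s,u)-\Ecal_m(s_0,u) = 2\int_{s_0}^s\!\int_{\Hcal_{s'}}(s'/t)\,(-\Box u+m^2u)\,\del_t u\,dx\,ds'.
$$

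To conclude, Cauchy--Schwarz on each $\Hcal_{s'}$ together with the elementary bound $\|(s'/t)\del_t u\|_{L^2_f(\Hcal_{s'})}\leq\Ecal_m(s',u)^{1/2}$, which is immediate from the second form of \eqref{eq:2energy}, gives
$$
\Bigl|\tfrac{d}{ds}\Ecal_m(s,u)\Bigr|\leq 2\,\|-\Box u+m^2u\|_{L^2_f(\Hcal_s)}\,\Ecal_m(s,u)^{1/2};
$$
dividing by $2\Ecal_m(s,u)^{1/2}$ and integrating in $s$ from $s_0$ to $s$ delivers \eqref{eq:w-EE}. The one step I expect to require genuine care, as opposed to routine verification, is the flux identification on $\Hcal_{s'}$: one must combine the Euclidean normal, the parameterization $t=\sqrt{s'^2+|x|^2}$, and the Euclidean surface element so that all Jacobian factors cancel and the flux reduces to exactly $\tfrac12\Ecal_m(s',u)$ with no stray $(s/t)$-type weights; once that is established, Cauchy--Schwarz and integration in $s$ are completely standard.
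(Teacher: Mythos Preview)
Your proposal is correct and is exactly the standard multiplier argument that the cited references \cite{PLF-YM-book, PLF-YM-cmp} use; the paper itself does not give a proof but simply defers to those sources. The flux identification on $\Hcal_{s'}$ that you flag as the delicate step indeed works out as you describe, and the remaining Cauchy--Schwarz and integration steps are routine.
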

The proof of \eqref{eq:w-EE} can be found in \cite{PLF-YM-book, PLF-YM-cmp}.

In comparison with $\Ecal_m$, we use $E_m$ to denote the usual energy on the flat slices $t = constant$, which is expressed as
$$
E_m (t, u)
=
\int_{\RR^3} \big| \del u \big|^2 + m^2 u^2 \, dx.
$$
Similarly, we have the following energy estimate
\bel{eq:w-EE2} 
E_m(t, u)^{1/2}
\leq 
E_m(t_0, u)^{1/2}
+ \int_2^t \big\| -\Box u + m^2 u \big\|_{L^2(\RR^3)} \, dt'.
\ee

%==========================================================================================================

\section{Mass independent $L^2$ norm estimates for Klein-Gordon equations}\label{sec:key}

We will rely on the following key proposition to obtain the mass independent $L^2$--type energy estimates for the solution to the system \eqref{eq:model-mKG}. A similar result was obtained in \cite{Dong1905}, and we now provide an enhanced version of it.

\begin{proposition}\label{prop:Va-key}
Consider the wave-Klein-Gordon equation
$$
-\Box u + m^2 u 
= f,
\qquad
\big( u, \del_t u \big)(t_0)
= (u_0, u_1),
$$
with mass $m \in [0, 1]$, and assume %for $|I| \leq N$
$$
\| u_0 \|_{L^2(\RR^3)}
+
\| u_1 \|_{L^2(\RR^3) \bigcap L^{6/5}(\RR^3)}
\lesssim C_{t_0},
\quad
\| f \|_{L^{6/5}(\RR^3)}
\leq C_f t^{-1 + q},
$$
for some numbers $C_{t_0}$ and $C_f$.
Then we have
\begin{eqnarray}\label{eq:keyLemma}
\| u \|_{L^2(\RR^3)}
\lesssim 
\left\{
\begin{array}{lll}
C_{t_0} + C_f t^q, & \quad q>0,
\\
C_{t_0} + C_f \log t, & \quad q = 0,
\\
C_{t_0} + C_f, & \quad q<0.
\end{array}
\right.
\end{eqnarray}
%As a consequence, we also get
%\begin{eqnarray}\label{eq:keyLemma2}
%\| u \|_{L^\infty(\RR^3)}
%\lesssim 
%\left\{
%\begin{array}{lll}
%C_{t_0} t^{-1} + C_f t^{-1+q}, & \quad q>0,
%\\
%C_{t_0} t^{-1} + C_f \log t t^{-1}, & \quad q = 0,
%\\
%C_{t_0} t^{-1} + C_f t^{-1}, & \quad q<0.
%\end{array}
%\right.
%\end{eqnarray}
\end{proposition}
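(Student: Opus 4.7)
The plan is to turn the Klein-Gordon equation into a family of decoupled ODEs via the spatial Fourier transform, solve them explicitly, and estimate each piece in $L^2$ using Plancherel together with the Sobolev embedding \eqref{eq:S-embedding} in order to trade the small-frequency factor $1/|\xi|$ for an $L^{6/5}$ norm. The whole point is that the standard trick of absorbing $u$ into the mass-weighted energy loses control as $m\to 0$, whereas the Fourier-side analysis produces bounds that are manifestly independent of $m$.

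Concretely, I would first apply the Fourier transform in $x$ to $-\Box u + m^2 u = f$, obtaining for each frequency $\xi$ the second-order ODE $\del_t^2 \uh + \omega^2 \uh = \widehat f$, where $\omega = \omega(\xi, m) = \sqrt{4\pi^2 |\xi|^2 + m^2}$. Duhamel's formula yields the representation
\bel{eq:Duhamel-plan}
\uh(t,\xi) \;=\; \cos\!\big(\omega(t-t_0)\big)\,\widehat{u_0}(\xi) \;+\; \frac{\sin\!\big(\omega(t-t_0)\big)}{\omega}\,\widehat{u_1}(\xi) \;+\; \int_{t_0}^{t} \frac{\sin\!\big(\omega(t-t')\big)}{\omega}\,\widehat f(t',\xi)\, dt'.
\ee
By Plancherel, bounding $\|u(t)\|_{L^2}$ reduces to estimating the three summands on the right in $L^2_\xi$. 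The cosine piece is trivially controlled by $\|u_0\|_{L^2} \lesssim C_{t_0}$.

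For the remaining two pieces, the crucial observation is the mass-free pointwise bound
\be
\Big| \frac{\sin(\omega \tau)}{\omega} \Big| \;\leq\; \frac{1}{\omega} \;\leq\; \frac{1}{2\pi |\xi|},
\ee
which holds for every $m \geq 0$. Applying this to the $\widehat{u_1}$ term and invoking Plancherel together with \eqref{eq:S-embedding} gives
\be
\Big\| \frac{\sin(\omega(t-t_0))}{\omega}\, \widehat{u_1} \Big\|_{L^2_\xi} \;\lesssim\; \Big\| \frac{u_1}{\Lambda} \Big\|_{L^2_x} \;\lesssim\; \|u_1\|_{L^{6/5}} \;\lesssim\; C_{t_0},
\ee
where $\Lambda = \sqrt{-\Delta}$. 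By Minkowski's integral inequality, the Duhamel piece is bounded by
\be
\int_{t_0}^{t} \Big\| \frac{\widehat f(t',\cdot)}{\omega} \Big\|_{L^2_\xi}\, dt' \;\lesssim\; \int_{t_0}^{t} \|f(t')\|_{L^{6/5}}\, dt' \;\leq\; C_f \int_{t_0}^{t} (t')^{-1+q}\, dt'.
\ee

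Finally, a direct integration in $t'$ separates into the three regimes: for $q>0$ the integral is $\lesssim t^q$, for $q=0$ it is $\lesssim \log t$, and for $q<0$ it is uniformly bounded. Combining with the estimates for the first two terms delivers \eqref{eq:keyLemma}. The only delicate point is the low-frequency contribution, and it is precisely resolved by the Sobolev embedding step: without \eqref{eq:S-embedding} one would have no way to absorb the $1/|\xi|$ singularity in a mass-independent fashion, so the $L^{6/5}$ hypothesis on $u_1$ and $f$ is exactly what the proof requires.
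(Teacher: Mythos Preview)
Your proof is correct and follows essentially the same route as the paper: Fourier transform in space, explicit Duhamel representation for the resulting ODE, the mass-independent bound $\omega^{-1}\leq (2\pi|\xi|)^{-1}$, and then the Sobolev embedding \eqref{eq:S-embedding} to convert the $1/|\xi|$ factor into an $L^{6/5}$ norm before integrating in time. The only cosmetic differences are notation ($\omega$ versus the paper's $\xi_m$) and your slightly more explicit mention of Minkowski's inequality.
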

\begin{proof}

We first write the equation of $u$ in the Fourier space $(t, \xi)$
$$
\del_t \del_t \uh + \xi_m^2 \uh
= \widehat{f},
$$
and solve the ordinary differential equation to get the solution
$$
\uh(t, \xi)
= \cos\big( t \xi_m \big) \uh_0
+ { \sin\big( t \xi_m \big) \over \xi_m } \uh_1
+ {1\over \xi_m} \int_{t_0}^{t} \sin\big( (t-t') \xi_m \big) \widehat{f}(t') \, dt',
$$
with the notations defined by
$$
\uh_0 = \widehat{u_0},
\qquad
\uh_1 = \widehat{u_1},
\qquad
\xi_m = \sqrt{4 \pi^2 |\xi|^2 + m^2} \geq |\xi|.
$$

Next, we take $L^2$ norm in the frequency space to obtain
$$
\aligned
&\big\| \uh(t, \cdot) \big\|_{L^2(\RR^3)}
\\
\lesssim 
&\big\| \cos\big( t \xi_m \big) \uh_0 \big\|_{L^2(\RR^3)}
+\Big\|  { \sin\big( t \xi_m \big) \over \xi_m } \uh_1 \Big\|_{L^2(\RR^3)}
+ \Big\| {1\over \xi_m} \int_{t_0}^{t} \sin\big( (t-t') \xi_m \big) \widehat{f}(t') \, dt' \Big\|_{L^2(\RR^3)}
\\
\lesssim 
& \big\| \uh_0 \big\|_{L^2(\RR^3)}
+\Big\|  { 1 \over |\xi| } \uh_1 \Big\|_{L^2(\RR^3)}
+   \int_{t_0}^{t} \Big\| {1\over |\xi|} \widehat{f}(t') \Big\|_{L^2(\RR^3)} \, dt', 
\endaligned
$$
which in the physical space reads
$$
\aligned
\big\| u(t, \cdot) \big\|_{L^2(\RR^3)}
\lesssim 
\big\| u_0 \big\|_{L^2(\RR^3)}
+\Big\|  { u_1 \over \Lambda } \Big\|_{L^2(\RR^3)}
+   \int_{t_0}^{t} \Big\| {f (t')\over \Lambda} \Big\|_{L^2(\RR^3)} \, dt',
\endaligned
$$
with $\Lambda = \sqrt{-\del_a \del^a}$.

Then by the Sobolev embedding theorem \eqref{eq:S-embedding}, we admit
$$
\big\| u(t, \cdot) \big\|_{L^2(\RR^3)}
\lesssim 
\big\| u_0 \big\|_{L^2(\RR^3)}
+\big\| u_1 \big\|_{L^{6/5}(\RR^3)}
+  \int_{t_0}^{t} \big\| f (t') \big\|_{L^{6/5}(\RR^3)} \, dt', 
$$
and the simple result of the integral 
$$
\int_{t_0}^t t'^{-1 + q} \, dt'
$$
implies the desired result \eqref{eq:keyLemma}.
%Finally, the estimates in \eqref{eq:keyLemma2} follows from the Klainerman-Sobolev inequality \eqref{eq:Va-K-S2}
\end{proof}

We note in the proof that in order to obtain the $L^2$--type energy estimates for the solution we transformed the original equation to the Fourier space and solve the corresponding ordinary differential equation and then conduct the analysis. Worth to mention, we find that such procedures to bound the $L^2$--type norms for Klein-Gordon equations (with possibly vanishing mass) can also be applied to pure wave equations, especially in the low dimension where the conformal energy cannot bound the $L^2$ norm of the solution, see for instance \cite{Dong1910}.

As a consequence, combined with the Klainerman-Sobolev inequality \eqref{eq:Va-K-S} we also get
\begin{eqnarray}\label{eq:keyLemma2}
\| u \|_{L^\infty(\RR^3)}
\lesssim 
\left\{
\begin{array}{lll}
C_{t_0} t^{-1} + C_f t^{-1+q}, & \quad q>0,
\\
C_{t_0} t^{-1} + C_f t^{-1} \log t, & \quad q = 0,
\\
C_{t_0} t^{-1} + C_f t^{-1}, & \quad q<0,
\end{array}
\right.
\end{eqnarray}
if additional information for higher order energies are true (with $\Gamma \in \{\del, \Omega, L \}$)
$$
\big\| \Gamma^I u(t_0) \big\|_{L^2(\RR^3)}
+
\big\| \Gamma^I \del_t u(t_0) \big\|_{L^2(\RR^3) \bigcap L^{6/5}(\RR^3)}
\lesssim C_{t_0},
\quad
\big\| \Gamma^I f \big\|_{L^{6/5}(\RR^3)}
\leq C_f t^{-1 + q},
\qquad
|I|  \leq 3.
$$

%==========================================================================================================

\section{Proof for the global existence result}\label{sec:existence}

\subsection{Initialisation of the bootstrap method}

In this section, we aim to prove the uniform global existence result for the system \eqref{eq:model-mKG} via the hyperboloidal foliation method. 

As usual, we will rely on the bootstrap method. The local well-posedness result allows us to assume (for all $i$) 
\bel{eq:BA-mKG}
\aligned
\Ecal_m (s, \del^I L^J v_i)^{1/2}
\leq C_1 \eps,
\qquad
|I| + |J| \leq N,
\endaligned
\ee
for all $s \in [s_0, s_1)$ with $s_1 > s_0$. In \eqref{eq:BA-mKG}, $C_1 \ll 1$ is some large constant to be determined, and $\eps > 0$ is the size of the initial data satisfying $C_1 \eps \ll \delta \ll 1$, and $s_1$ is defined by
\bel{eq:def-s1}
s_1 := \sup \{ s : s > s_0, \eqref{eq:BA-mKG} ~holds\}.
\ee
If $s_1 = + \infty$, then the global existence result is done. So in the following proof, we first assume $s_1 < + \infty$ and then deduce contradictions to assert that $s_1$ must be $+\infty$.

By recalling the definition of the energy $\Ecal_m$, we easily have the following estimates.

\begin{lemma}
Assume \eqref{eq:BA-mKG} holds, then for all $s \in [s_0, s_1)$ and $|I| + |J| \leq N$ we have the following estimates 
\be 
\aligned
\big\| (s/t) \del \del^I L^J v_i \big\|_{L^2_f(\Hcal_s)} + m \big\| \del^I L^J v_i \big\|_{L^2_f(\Hcal_s)} + \sum_a \big\| \underdel_a \del^I L^J v_i \big\|_{L^2_f(\Hcal_s)}
&\lesssim C_1 \epsilon, 
\\
\big\| (s/t) \del^I L^J \del v_i \big\|_{L^2_f(\Hcal_s)} + \sum_a \big\| \del^I L^J \underdel_a v_i \big\|_{L^2_f(\Hcal_s)}
&\lesssim C_1 \epsilon.
\endaligned
\ee
\end{lemma}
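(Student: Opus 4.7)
The plan is to read the three desired $L^2_f$ bounds directly off the definition of $\Ecal_m$ given in \eqref{eq:2energy} and then propagate them through $\del^I L^J$ using only the commutator estimates already recorded in Lemma \ref{lem:comm-est11}. No PDE is solved here; the lemma is essentially bookkeeping.

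First, applying the second form of \eqref{eq:2energy} to $u = \del^I L^J v_i$ and combining it with the bootstrap hypothesis \eqref{eq:BA-mKG} immediately gives
\[
\big\| (s/t)\del_t \del^I L^J v_i \big\|_{L^2_f(\Hcal_s)}
+ \sum_a \big\| \underdel_a \del^I L^J v_i \big\|_{L^2_f(\Hcal_s)}
+ m \big\| \del^I L^J v_i \big\|_{L^2_f(\Hcal_s)}
\lesssim C_1 \eps,
\]
for every $|I|+|J| \leq N$. To upgrade $(s/t)\del_t$ to $(s/t)\del$ in the first line, I would use the frame identity $\del_a = -(x_a/t)\underdel_0 + \underdel_a$ together with $|x|/t \leq 1$ on $\Hcal_s$, so that pointwise $|(s/t)\del_a w| \leq |(s/t)\del_t w| + |\underdel_a w|$; choosing $w = \del^I L^J v_i$ and taking $L^2_f$ norms closes the first displayed inequality.

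For the second line, the goal is to commute $\del^I L^J$ past $\del$ or past $\underdel_a$. A standard induction on $|I|+|J|$ using the first two bounds of Lemma \ref{lem:comm-est11} expresses $\del^I L^J \del v_i$ as a linear combination of terms $\del \del^{I'} L^{J'} v_i$ with $|I'|+|J'| \leq |I|+|J|$, after which I just invoke the already-established bound $\|(s/t)\del \del^{I'} L^{J'} v_i\|_{L^2_f(\Hcal_s)} \lesssim C_1 \eps$ term by term. Similarly, using the fifth estimate $|[\underdel_a, L_b]u| \lesssim \sum_c |\underdel_c u|$ of Lemma \ref{lem:comm-est11} (and the elementary $[\del_\alpha, \underdel_a]$ calculation, which contributes only $t^{-1}\del$ terms that are harmless since $t^{-1} \leq 1$ and $\|(s/t)\del \cdots\|_{L^2_f}$ is already under control), I rewrite $\del^I L^J \underdel_a v_i$ as a sum of terms $\underdel_b \del^{I'} L^{J'} v_i$ for $|I'|+|J'| \leq |I|+|J|$ and apply the first line again.

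The only mildly subtle point, and the one I would check most carefully, is the commutator between $\del$ and $\underdel_a$, because $\underdel_a = \del_a + (x_a/t)\del_t$ carries the factor $x_a/t$ which is differentiated by $\del_\alpha$, producing $t^{-1}\del_t$ tails. On $\Hcal_s$ one has $t \geq s \geq s_0 = 2$, so these tails are dominated by $(s/t)\del$ already bounded in the first line, and the argument closes. Everything else is routine induction and the triangle inequality.
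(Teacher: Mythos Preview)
Your proposal is correct and takes essentially the same approach as the paper, which in fact gives no explicit proof at all: the lemma is introduced with the sentence ``By recalling the definition of the energy $\Ecal_m$, we easily have the following estimates,'' and nothing more. Your write-up simply fills in the routine details---reading off the first line from the second expression in \eqref{eq:2energy} plus the frame identity $\del_a = -(x_a/t)\underdel_0 + \underdel_a$, and obtaining the second line by commuting $\del^I L^J$ through $\del$ and $\underdel_a$ via Lemma~\ref{lem:comm-est11}. One small sharpening: in the last paragraph the relevant inequality is $t^{-1} \leq s/t$ on $\Hcal_s$ (valid since $s \geq s_0 = 2$), not merely $t^{-1}\leq 1$; this is what lets the $t^{-1}\del_t$ tails from $[\del_\alpha,\underdel_a]$ be absorbed into the already-controlled $(s/t)\del$ norm.
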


Combined with the Sobolev--type inequality on hyperboloids \eqref{eq:Sobolev2}, the following pointwise estimates are valid.

\begin{lemma}\label{lem:BA1-consqt}
For all $|I| + |J| \leq N-2$ we have
\be 
\big| (s/t) \del \del^I L^J v_i \big| + m \big| \del^I L^J v_i \big| + \sum_a \big| \underdel_a \del^I L^J v_i \big|
\lesssim C_1 \eps t^{-3/2}.
\ee

\end{lemma}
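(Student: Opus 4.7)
\medskip

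\noindent\textbf{Proof proposal for Lemma \ref{lem:BA1-consqt}.}

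The plan is to apply the hyperboloidal Sobolev inequality \eqref{eq:Sobolev2} to each of the three quantities on the left-hand side separately, and reduce the resulting $L^2_f(\Hcal_s)$ norms to the $L^2_f$ bounds furnished by the previous lemma. Since \eqref{eq:Sobolev2} provides
$$
\sup_{\Hcal_s} \big| t^{3/2} u(t,x) \big|
\lesssim \sum_{|K|\leq 2} \big\| L^K u \big\|_{L^2_f(\Hcal_s)},
$$
we lose two orders of Lorentz boosts, which matches exactly the drop from $N$ to $N-2$ in the statement.

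First, for the term $m\, \del^I L^J v_i$, we apply \eqref{eq:Sobolev2} with $u = m \del^I L^J v_i$. Commuting the $L^K$'s with $\del^I L^J$ using Lemma \ref{lem:comm-est11} produces a finite sum of terms of the form $m \del^{I'} L^{J'} v_i$ with $|I'|+|J'| \leq |I|+|J|+|K| \leq N$, each of which is controlled by $C_1 \eps$ in $L^2_f(\Hcal_s)$ thanks to the previous lemma. Dividing by $t^{3/2}$ gives the desired bound.

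Second, for $(s/t)\, \del \del^I L^J v_i$, we again apply \eqref{eq:Sobolev2}, and the main task is to commute $L^K$ (with $|K|\leq 2$) past the factor $(s/t)$ and past $\del$. The estimates $|[L_a, (s/t)]w| \lesssim |(s/t)w|$ and $|[L_b L_a, (s/t)]w| \lesssim |(s/t)w| + \sum_c |(s/t)L_c w|$ from Lemma \ref{lem:comm-est11} let us pull $L^K$ inside $(s/t)$ up to acceptable error terms, while $[\del_\alpha, L_a]$ commutes into sums of translations. The outcome is a finite sum of expressions of the form $(s/t) \del \del^{I'} L^{J'} v_i$ with $|I'|+|J'|\leq N$, which by the previous lemma have $L^2_f$ norm $\lesssim C_1\eps$; dividing by $t^{3/2}$ yields the claim. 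The treatment of $\underdel_a \del^I L^J v_i$ is analogous, this time invoking $|[\underdel_a, L_b]u| \lesssim \sum_c |\underdel_c u|$ from Lemma \ref{lem:comm-est11} to move the boosts inside.

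The only mildly delicate point is bookkeeping the commutators, particularly in the $(s/t)$ factor: one must make sure that after applying up to two Lorentz boosts and commuting them through $(s/t)\del$, every term that appears is still of the same structural form controlled by the previous lemma and does not exceed $N$ total derivatives. This is the obstacle, but it is purely algorithmic given Lemma \ref{lem:comm-est11}, so no new idea is required beyond careful iteration.
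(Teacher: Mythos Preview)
Your proposal is correct and follows essentially the same approach as the paper, which simply states that the lemma follows by combining the hyperboloidal Sobolev inequality \eqref{eq:Sobolev2} with the $L^2_f$ bounds of the preceding lemma. Your explicit handling of the commutators via Lemma~\ref{lem:comm-est11} (including the $[L,(s/t)]$ and $[\underdel_a,L_b]$ estimates) is exactly the bookkeeping the paper leaves implicit, and the derivative count works out precisely because $|I|+|J|\leq N-2$ plus two Lorentz boosts from Sobolev lands you back at order $\leq N$.
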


Besides of the estimates above, we also introduce estimates obtained by using the Hardy inequality \eqref{eq:Hardy}--\eqref{eq:Hardy2}. They will not be used in the current section, but will be used in Section \ref{sec:decay}.

\begin{lemma}\label{lem:h-est0}
The following estimates are valid
\be 
\aligned
\big\| t^{-1} \del^I L^J v_i \big\|_{L^2_f(\Hcal_s)}
\lesssim C_1 \eps,
\qquad
|I| + |J| \leq N,
\\
\big| \del^I L^J v_i \big|
\lesssim C_1 \eps t^{-1/2},
\qquad
|I| + |J| \leq N-2.
\endaligned
\ee
\end{lemma}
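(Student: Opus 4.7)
The plan is to prove the two estimates in sequence, using the Hardy inequality \eqref{eq:Hardy2} to get the $t^{-1}$-weighted $L^2_f$ control first, and then invoking the weighted hyperboloidal Sobolev inequality to convert it into the pointwise decay. The ordering matters: the pointwise bound in the case when no translation derivative hits $v_i$ will rely crucially on the first estimate, because the hyperboloidal energy only controls $\del^I L^J v_i$ via a translation derivative (or through the term $m\|\del^I L^J v_i\|_{L^2_f(\Hcal_s)}$, which is unaffordable for a mass-independent bound).

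For the first inequality, fix $|I|+|J|\leq N$ and apply \eqref{eq:Hardy2} to $u:=\del^I L^J v_i$, which inherits support in $\Kcal$ from $v_i$. This gives
$$
\|t^{-1}\del^I L^J v_i\|_{L^2_f(\Hcal_s)} \lesssim \sum_a \|\underdel_a\del^I L^J v_i\|_{L^2_f(\Hcal_s)} \lesssim C_1\eps,
$$
where the second bound is exactly the first displayed energy estimate coming from the bootstrap assumption \eqref{eq:BA-mKG}.

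For the pointwise estimate, with $|I|+|J|\leq N-2$, I apply the weighted Sobolev inequality
$$
\sup_{\Hcal_s}|s\,t^{1/2}u| \lesssim \sum_{|K|\leq 2}\|(s/t)L^K u\|_{L^2_f(\Hcal_s)}
$$
recorded just after \eqref{eq:Sobolev2}, with $u=\del^I L^J v_i$, and split into two subcases. When $|I|\geq 1$, Lemma \ref{lem:comm-est11} guarantees that each iterated commutator $[L,\del]$ still produces a translation, so $L^K\del^I L^J v_i$ can be written as a finite linear combination of terms of the form $\del\,\del^{I'}L^{J'}v_i$ with $|I'|+|J'|\leq N-1$. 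The right-hand side is then bounded by $\sum\|(s/t)\del\,\del^{I'}L^{J'}v_i\|_{L^2_f(\Hcal_s)}\lesssim C_1\eps$ using the bootstrap energy bound, and dividing by $s\,t^{1/2}\geq 2\,t^{1/2}$ yields $|\del^I L^J v_i|\lesssim C_1\eps\,t^{-1/2}$.

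When $|I|=0$, no translation reaches $v_i$, so the previous step is unavailable; this is the main obstacle. I exploit the factorisation $(s/t)=s\cdot t^{-1}$ together with the first estimate already established: with $|K|+|J|\leq N$,
$$
\|(s/t)L^K L^J v_i\|_{L^2_f(\Hcal_s)} = s\,\|t^{-1}L^K L^J v_i\|_{L^2_f(\Hcal_s)} \lesssim s\,C_1\eps,
$$
and dividing by $s\,t^{1/2}$ again gives $|L^J v_i|\lesssim C_1\eps\,t^{-1/2}$. The Hardy-derived first estimate is thus exactly the mass-independent substitute that closes this otherwise blocked branch, which is why the two bounds are packaged into a single lemma.
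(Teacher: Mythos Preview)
Your proof is correct and follows the approach the paper indicates (the paper gives no detailed argument, only the remark that the estimates follow from the Hardy inequality \eqref{eq:Hardy}--\eqref{eq:Hardy2}). One small simplification: the case split in the pointwise estimate is unnecessary, since your $|I|=0$ argument works for all $|I|+|J|\leq N-2$ uniformly --- after commuting $L^K$ through $\del^I$, every resulting term is of the form $\del^{I'}L^{J'}v_i$ with $|I'|+|J'|\leq N$, and then $\|(s/t)\del^{I'}L^{J'}v_i\|_{L^2_f(\Hcal_s)} = s\,\|t^{-1}\del^{I'}L^{J'}v_i\|_{L^2_f(\Hcal_s)} \lesssim s\,C_1\eps$ by the first estimate, exactly as in your second branch.
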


\subsection{Improved energy estimates and global existence result}

We now want to show the improved energy estimates for the solution $v = (v_i)$, and then conclude the global existence result.

\begin{proposition}[Improved energy estimates]\label{prop:Imp-EE}
Let the assumptions in\eqref{eq:BA-mKG} be true, then for all $s \in [s_0, s_1)$ it holds that
\bel{eq:Imp-EE}
\Ecal_m (s, \del^I L^J v_i)^{1/2}
\lesssim \eps + (C_1 \eps)^2,
\qquad
|I| + |J| \leq N.
\ee
\end{proposition}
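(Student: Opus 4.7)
The plan is to apply $\del^I L^J$ to the system \eqref{eq:model-mKG} and feed the resulting commuted equation into the hyperboloidal energy inequality \eqref{eq:w-EE}, thereby reducing the improved estimate to controlling
\begin{equation*}
\int_{s_0}^s \big\| -\Box(\del^I L^J v_i) + m^2 \del^I L^J v_i \big\|_{L^2_f(\Hcal_{s'})} \, ds' \lesssim (C_1\eps)^2,
\end{equation*}
uniformly in $m \in (0,1]$. Using Lemma \ref{lem:comm-est11} for the commutators among $\del_\alpha$ and $L_a$, together with Lemma \ref{lem:commu-null} to pass $\del^I L^J$ through the null forms ($\del$ commutes exactly with both $Q_0$ and $Q_{\alpha\beta}$, and $L$ commutes through $Q_0$ exactly and through $Q_{\alpha\beta}$ up to terms of the same form), the commuted equation becomes
\begin{equation*}
-\Box(\del^I L^J v_i) + m^2 \del^I L^J v_i
= \hskip-3pt \sum_{\substack{|I_1|+|I_2|\leq|I|\\|J_1|+|J_2|\leq|J|}} \hskip-3pt \Big( \widetilde N^{jk}_i\, Q_0 + \widetilde M^{jk\alpha\beta}_i\, Q_{\alpha\beta} \Big)\big( \del^{I_1}L^{J_1} v_j,\, \del^{I_2}L^{J_2} v_k \big),
\end{equation*}
with bounded constant coefficients $\widetilde N, \widetilde M$.

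The goal is then to establish, for each summand, the per-slice bound
\begin{equation*}
\big\| Q_\bullet(\del^{I_1}L^{J_1} v_j,\, \del^{I_2}L^{J_2} v_k) \big\|_{L^2_f(\Hcal_{s'})} \lesssim (C_1\eps)^2\, s'^{-3/2},
\end{equation*}
so that $\int_{s_0}^s s'^{-3/2}\,ds' \lesssim 1$. For this I would invoke the hyperboloidal null-form estimate (Lemma \ref{lem:est-null-h})
\begin{equation*}
|Q_0(u,w)| + |Q_{\alpha\beta}(u,w)|
\lesssim (s/t)^2 |\del_t u\, \del_t w| + \sum_{a,\alpha}\big( |\underdel_\alpha u\, \underdel_a w| + |\underdel_\alpha w\, \underdel_a u| \big),
\end{equation*}
and perform a low--high split of the total order $|I|+|J|\leq N$: since $N \geq 6$, in each product at least one factor has order $\leq N-2$ and inherits the pointwise decay $(s/t)|\del \del^{I'}L^{J'} v_i| + |\underdel_a \del^{I'}L^{J'} v_i| \lesssim C_1\eps\, t^{-3/2}$ from Lemma \ref{lem:BA1-consqt}, while the remaining factor contributes $C_1\eps$ in $L^2_f$ through the bootstrap weighted norms $\|(s/t)\del u\|_{L^2_f}$ and $\|\underdel_a u\|_{L^2_f}$. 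Using $t \geq s'$ on $\Hcal_{s'}$, each pointwise $t^{-3/2}$ is bounded by $s'^{-3/2}$. Inserting this into \eqref{eq:w-EE} and adding the initial contribution $\Ecal_m(s_0, \del^I L^J v_i)^{1/2} \lesssim \eps$ coming from the data assumption yields $\Ecal_m(s, \del^I L^J v_i)^{1/2} \lesssim \eps + (C_1\eps)^2$.

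The main obstacle is the cross term $|\del_t u|\,|\underdel_a w|$ in Lemma \ref{lem:est-null-h}, which carries no explicit $(s/t)$ weight. When the high-order derivative sits on $u$, only the weighted quantity $\|(s/t)\del_t u\|_{L^2_f} \lesssim C_1\eps$ is directly available, so the missing weight must be absorbed by the low-order factor: one writes $|\del_t u\,\underdel_a w| = (s/t)|\del_t u|\cdot (t/s)|\underdel_a w|$ and observes that on $\Hcal_{s'}$
\begin{equation*}
(t/s')\,|\underdel_a w| \lesssim C_1\eps\,(t/s')\,t^{-3/2} = C_1\eps\, s'^{-1}\, t^{-1/2} \lesssim C_1\eps\, s'^{-3/2},
\end{equation*}
where the last step uses $t \geq s'$. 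This weight-juggling is the only delicate point; every other sub-case of the low--high split proceeds in the same way, and the argument is uniform in $m \in (0,1]$ because $m$ does not enter the nonlinearity at all.
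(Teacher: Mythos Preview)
Your proposal is correct and follows essentially the same route as the paper's proof: commute $\del^I L^J$ through the equation via Lemma~\ref{lem:commu-null}, apply the hyperboloidal null-form decomposition of Lemma~\ref{lem:est-null-h}, perform the low--high split using $N\geq 6$, and feed the resulting $s'^{-3/2}$ bound into \eqref{eq:w-EE}. Your explicit discussion of the cross term $|\del_t u|\,|\underdel_a w|$ and the weight transfer $(s/t)\cdot(t/s)$ is exactly the mechanism behind the paper's $t^{-1/2}s'^{-1}$ term in the final integrand.
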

\begin{proof}
Acting the vector field $\del^I L^J$ with $|I| + |J| \leq N$ on the model equations in \eqref{eq:model-mKG}, we get
$$
- \Box \del^I L^J v_i + m^2 \del^I L^J v_i 
= N^{jk}_i \del^I L^J Q_0 (v_j, v_k) + M^{jk \alpha \beta}_i \del^I L^J Q_{\alpha \beta} (v_j, v_k).
$$
According to the commutator estimates in Lemma \ref{lem:commu-null}, we can bound the right hand side as follows
$$
\aligned
&\big| N^{jk}_i \del^I L^J Q_0 (v_j, v_k) + M^{jk \alpha \beta}_i \del^I L^J Q_{\alpha \beta} (v_j, v_k) \big|
\\
\lesssim
&\sum_{\substack{j, k, \alpha, \beta \\ |I_1| + |I_2| \leq |I|, |J_1| + |J_2| \leq |J| }} \Big( \big| Q_0 (\del^{I_1}L^{J_1} v_j, \del^{I_2}L^{J_2} v_k) \big| + \big| Q_{\alpha \beta} (\del^{I_1}L^{J_1} v_j, \del^{I_2}L^{J_2} v_k) \big| \Big).
\endaligned
$$
Then the estimates for null forms in the hyperboloidal setting in Lemma \ref{lem:est-null-h} yield
$$
\aligned
&\big| N^{jk}_i \del^I L^J Q_0 (v_j, v_k) + M^{jk \alpha \beta}_i \del^I L^J Q_{\alpha \beta} (v_j, v_k) \big|
\\
\lesssim
&\sum_{\substack{j, k, \alpha, a \\ |I_1| + |I_2| \leq |I|, |J_1| + |J_2| \leq |J| }} \Big( (s/t)^2 \big| \del_t \del^{I_1}L^{J_1} v_j \del_t \del^{I_2}L^{J_2} v_k \big| 
\\
& \hskip3.4cm + \big|\underdel_\alpha \del^{I_1}L^{J_1} v_j \underdel_a \del^{I_2}L^{J_2} v_k \big| + \big|\underdel_\alpha \del^{I_2}L^{J_2} v_k \underdel_a \del^{I_1}L^{J_1} v_j \big|\Big).
\endaligned
$$

We rely on the energy estimates for Klein-Gordon equations on hyperboloids \eqref{eq:w-EE} to get
$$
\aligned
&\Ecal_m (s, \del^I L^J v_i)^{1/2}
\\
\leq 
&\Ecal_m (s_0, \del^I L^J v_i)^{1/2}
+
\int_{s_0}^s \big\| N^{jk}_i \del^I L^J Q_0 (v_j, v_k) + M^{jk \alpha \beta}_i \del^I L^J Q_{\alpha \beta} (v_j, v_k) \big\|_{L^2_f(\Hcal_{s'})} \, ds'
\\
\lesssim
&\epsilon
+  \sum_{\substack{j, k, \alpha, a \\ |I_1| + |I_2| \leq |I|, |J_1| + |J_2| \leq |J| }} 
\int_{s_0}^s \Big\| (s/t)^2 \big| \del_t \del^{I_1}L^{J_1} v_j \del_t \del^{I_2}L^{J_2} v_k \big| 
\\
& \hskip3.4cm + \big|\underdel_\alpha \del^{I_1}L^{J_1} v_j \underdel_a \del^{I_2}L^{J_2} v_k \big| + \big|\underdel_\alpha \del^{I_2}L^{J_2} v_k \underdel_a \del^{I_1}L^{J_1} v_j \big|\Big\|_{L^2_f(\Hcal_{s'})} \, ds'.
\endaligned
$$
We thus have (since\footnote{Actually $N\geq 4$ is enough to ensure the global existence result.} $N \geq 6$)
$$
\aligned
&\Ecal_m (s, \del^I L^J v_i)^{1/2}
\\
\lesssim
&\epsilon
+ \sum_{\substack{j, k, \alpha, a \\ |I_1| + |J_1| \leq N, |I_2| + |J_2| \leq N-2}} 
\int_{s_0}^s \Big( \big\| (s'/t) \del_t \del^{I_1}L^{J_1} v_j \big\|_{L^2_f(\Hcal_{s'})} \big\| (s'/t) \del_t \del^{I_2}L^{J_2} v_k \big\|_{L^\infty_f(\Hcal_{s'})} 
\\
&\hskip3.5cm + \big\| (s'/t) \underdel_\alpha \del^{I_1}L^{J_1} v_j  \big\|_{L^2_f(\Hcal_{s'})} \big\| (t/s') \underdel_a \del^{I_2}L^{J_2} v_k \big\|_{L^\infty_f(\Hcal_{s'})}
\\
&\hskip3.5cm + \big\| \underdel_a \del^{I_1}L^{J_1} v_j \big\|_{L^2_f(\Hcal_{s'})} \big\| \underdel_\alpha \del^{I_2}L^{J_2} v_k  \big\|_{L^\infty_f(\Hcal_{s'})}
\Big) \, ds'
\\
\lesssim
& \eps + (C_1 \eps)^2 \int_{s_0}^s \big( t^{-3/2} + t^{-1/2} s'^{-1}  \big) \, ds'
\lesssim \eps + (C_1 \eps)^2.
\endaligned
$$
Hence the proof is done.

\end{proof}

As a consequence of the improved energy estimates in Proposition \ref{prop:Imp-EE}, we conclude the global existence result of the system \eqref{eq:model-mKG}.

\begin{proof}[Proof of the global existence result]\label{proof:existence}

We choose $C_1$ large and $\eps$ small such that 
$$
C \big( \eps + (C_1 \eps)^2 \big)
\leq {1\over 2} C_1 \eps,
$$
in which $C$ is the hidden constant in \eqref{eq:Imp-EE}, which thus leads us to the improved estimates
$$
\Ecal_m (s, \del^I L^J v_i)^{1/2}
\leq {1\over 2} C_1 \eps,
\qquad
|I| + |J| \leq N.
$$
If $s_1 > s_0$ is some finite number, then the improved estimates above implies that we can extend the solution $v = (v_i)$ to a strictly larger (hyperbolic) time interval, which contradicts the definition of $s_1$ in \eqref{eq:def-s1}. Hence $s_1$ must be $+\infty$, which implies the global existence of the solution $v = (v_i)$ to the system \eqref{eq:model-mKG}.
\end{proof}

%==========================================================================================================

\section{Proof for the uniform pointwise decay result}\label{sec:decay}

\subsection{Decomposition and nonlinear transformation}

Our task in this section is to show the unified pointwise decay estimate \eqref{eq:m-decay}
$$
\big| v_i (t, x) \big| \lesssim {1\over t + m t^{3/2}},
$$
which corresponds to the usual wave decay and Klein-Gordon decay with $m = 0, 1$ respectively.

Recall the system \eqref{eq:model-mKG}
$$
\aligned
- \Box v_i + m^2 v_i 
&= N^{jk}_i Q_0 (v_j, v_k) + M^{jk \alpha \beta}_i Q_{\alpha \beta} (v_j, v_k),
\\
\big( v_i, \del_t v_i \big)(t_0)
&=
\big( v_{i0}, v_{i1} \big).
\endaligned
$$

In order to arrive at \eqref{eq:m-decay}, it suffices to show
\bel{eq:wave0}
\big| v_i (t, x) \big| \lesssim t^{-1},
\ee
which is because we already obtain 
$$
\big| v_i (t, x) \big| \lesssim m^{-1} t^{-3/2}
$$
in Section \ref{sec:existence}. To achieve \eqref{eq:wave0}, we first do a nonlinear transformation from $v_i$ to $V_i = v_i + N_i^{jk} v_j v_k$,
and then decompose $V_i$ into pieces
\be 
V_i = V_{c,i} + V_{m,i} + V_{n,i}.
\ee
For clarity, we note that we use $V_{c,i}$ to denote the decomposition with cubic nonlinearities, use $V_{m, i}$ to denote the decomposition with nonlinearities with $m$ dependent factors, and use $V_{n, i}$ to denote the decomposition with null nonlinearities of the type $Q_{\alpha\beta}$.
The functions $V$'s are solutions to the following (linear) Klein-Gordon equations:
\bel{eq:V-eq1}
\aligned
- \Box V_i + m^2 V_i 
=
& M_i^{jk\alpha\beta} Q_{\alpha\beta} (v_j, v_k)
- m^2 N_i^{jk} v_j v_k
\\
&+ N_i^{jk} v_k \big( N_j^{mn} Q_0 (v_m, v_n) + M_j^{mn\alpha\beta} Q_{\alpha\beta} (v_m, v_n) \big) 
\\
&+ N_i^{jk} v_j \big( N_k^{mn} Q_0(v_m, v_n) + M_k^{mn\alpha\beta} Q_{\alpha\beta} (v_m, v_n) \big),
\\
\big(V_i, \del_t V_i \big) (t_0) 
= 
&\big( V_{i0}, V_{i1}\big)
:=
\big(v_{i0} + N_i^{jk} v_{j0} v_{k0}, v_{i1} + N_i^{jk} (v_{j0} v_{k1} + v_{j1} v_{k0}) \big),
\endaligned
\ee

\bel{eq:v-eq2}
\aligned
-\Box V_{c, i} + m^2 V_{c, i}
=
&N_i^{jk} v_k \big( N_j^{mn} Q_0 (v_m, v_n) + M_j^{mn\alpha\beta} Q_{\alpha\beta} (v_m, v_n) \big) 
\\
&+ N_i^{jk} v_j \big( N_k^{mn} Q_0(v_m, v_n) + M_k^{mn\alpha\beta} Q_{\alpha\beta} (v_m, v_n) \big),
\\
\big(V_{c, i}, \del_t V_{c, i} \big) (t_0) 
=& \big( V_{i0}, V_{i1}\big),
\endaligned
\ee

\bel{eq:v-eq3}
\aligned
-\Box V_{m, i} + m^2 V_{m, i}
=
&- m^2 N_i^{jk} v_j v_k,
\\
\big(V_{m, i}, \del_t V_{m, i} \big) (t_0) 
=& (0, 0),
\endaligned
\ee

as well as

\bel{eq:v-eq4}
\aligned
- \Box V_{n, i} + m^2 V_{n, i}
=
& M_i^{jk\alpha\beta} Q_{\alpha\beta} (v_j, v_k),
\\
\big(V_{n, i}, \del_t V_{n, i} \big) (t_0) 
=& (0, 0).
\endaligned
\ee

In addition to the decomposition above, we find it helps to utilise the divergence structure of the null forms of the form $Q_{\alpha\beta}$ if we further decompose the $V_{n, i}$ part as
\be
V_{n,i} = V_{n,i}^0 + \del_\gamma V_{n,i}^{\gamma}.
\ee
We use $V_{n,i}^5, V_{n,i}^{\gamma}$ to denote the decomposition with 0 nonlinearities and divergent nonlinearities without $\del_\gamma$ respectively.
Similarly, $V_{n,i}^5, V_{n,i}^{\gamma}$ are solutions to the following (linear) Klein-Gordon equations:

\bel{eq:v-eq5}
\aligned
- \Box V_{n,i}^5 + m^2 V_{n,i}^5
&= 0,
\\
\big( V_{n,i}^5, \del_t V_{n,i}^5  \big) (t_0) 
&=
(0, 0),
\endaligned
\ee

\bel{eq:v-eq6}
\aligned
- \Box V_{n,i}^{\gamma} + m^2 V_{n,i}^{\gamma}
&= M_i^{jk\gamma\beta} v_j \del_\beta v_k - M_i^{jk\alpha\gamma} v_j \del_\alpha v_k,
\\
\big( V_{n,i}^{\gamma}, \del_t V_{n,i}^{\gamma} \big) (t_0)
&= (0, 0).
\endaligned
\ee

\subsection{The mass independent $L^2$ norm estimates}

Recall that our goal is to obtain the following mass independent $L^2$ estimates for functions $V$'s, and we will rely on the bootstrap method one more time to achieve it.
%Before doing that, we first need some estimates obtained from the hyperboloidal setting.

\begin{proposition}\label{prop:L2-m-Indpt}
For all $|I| + |J| \leq N-1$ (and for each $i$) we have
\bel{eq:L2-m-Indpt}
\aligned
\big\| \del^I L^J \big( V_{c, i}, V_{m, i}, V_{n, i}, V_{n, i}^5  \big) \big\|_{L^2(\RR^3)} 
+ t^{-\delta} \big\| \del^I L^J \del V_{n,i}^{\gamma} \big\|_{L^2(\RR^3)} 
\leq C_3 \eps,
\endaligned
\ee
and for all $|I| + |J| = N$ we have
\bel{eq:L2-m-Indpt}
\aligned
\big\| \del^I L^J \big( V_{c, i}, V_{m, i}, V_{n, i}^5  \big) \big\|_{L^2(\RR^3)} 
+ t^{-\delta} \big\| \del^I L^J \del V_{n,i}^{\gamma} \big\|_{L^2(\RR^3)} 
\leq C_3 \eps,
\endaligned
\ee
%\bel{eq:L2-m-Indpt2}
%\aligned
%\big\| \del^I L^J \big( V_{c, i}, V_{m, i}, V_{n, i}^5, \del^\gamma V_{n,i}^{\gamma}  \big) \big\|_{L^2(\RR^3)}
%\leq C_2 \eps t^\delta,
%\endaligned
%\ee
in which $C_3 \geq C_2 \geq C_1$ are some constants to be determined.
\end{proposition}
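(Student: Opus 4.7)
The plan is a bootstrap argument on an interval $[t_0,T_1)$: assume \eqref{eq:L2-m-Indpt} holds with constant $C_3$, and improve each bound to $\tfrac12 C_3\eps$. The main tool is Proposition \ref{prop:Va-key}, applied to each of the linear Klein-Gordon equations \eqref{eq:v-eq2}--\eqref{eq:v-eq6} after commuting with $\del^I L^J$ (using Lemma \ref{lem:commu-null} so that null forms remain null after commutation). This reduces the task to bounding the $L^{6/5}$ norm of the commuted right-hand side as $C_f\,t^{-1+q}$, with $q<0$ for the bounds $\leq C_3\eps$ and with $q=\delta'$ slightly smaller than $\delta$ for the $t^\delta$-growing bound on $\del V_{n,i}^\gamma$.

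Case by case: for $V_{n,i}^5$, the source in \eqref{eq:v-eq5} is zero, so Proposition \ref{prop:Va-key} immediately yields the $\eps$-bound once initial data of size $O(\eps)$ is identified from the Cauchy-data mismatch induced by the divergence splitting. For $V_{m,i}$, the explicit factor $m^2$ in \eqref{eq:v-eq3} cancels the $m^{-1}$-loss of the pointwise bound $m\,|\del^I L^J v_i|\lesssim t^{-3/2}$ of Lemma \ref{lem:BA1-consqt}; converting $L^2$ to $L^{6/5}$ at the cost of at most one power of $t$ via the support inclusion $\text{supp}\subset\{|x|\leq t\}$, one achieves $\|m^2 v_j v_k\|_{L^{6/5}}\lesssim t^{-1-\delta'}$. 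For $V_{c,i}$, the cubic nonlinearity in \eqref{eq:v-eq2} is schematically $v\cdot\del v\cdot\del v$, and distributing pointwise decay on two factors (through Lemmas \ref{lem:BA1-consqt} and \ref{lem:h-est0}) and $L^{6/5}$ (after support conversion) or $L^2$ on the remaining factor gives an $L^{6/5}$ bound $\lesssim t^{-1-\delta'}$ thanks to the cubic structure.

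For the null-form parts: in the low-order regime $|I|+|J|\leq N-1$, equation \eqref{eq:v-eq4} for $V_{n,i}$ can be handled directly, because Lemma \ref{lem:est-null-f} provides the gain $|Q_{\alpha\beta}(v_j,v_k)|\lesssim t^{-1}(|Lv_j||\del v_k|+|Lv_k||\del v_j|)$; combining with the pointwise decay of $Lv$ and $\del v$ at low order and the hyperboloidal $L^2$ bounds at high order, this yields the desired $t^{-1-\delta'}$ bound. For $V_{n,i}^\gamma$ governed by \eqref{eq:v-eq6}, the nonlinearity $v_j\,\del v_k$ is merely quadratic with no null structure; one cannot gain the extra $t^{-1}$, and one must settle for $\|v_j\del v_k\|_{L^{6/5}}\lesssim t^{-1+\delta'}$, which after Proposition \ref{prop:Va-key} produces exactly the announced $t^\delta$-growth.

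The main obstacle is the top-order case $|I|+|J|=N$ for the null-form contribution. A naive commutation of $\del^I L^J$ into $Q_{\alpha\beta}(v_j,v_k)$ would require pointwise control on $N$ derivatives of $v$, which is unavailable uniformly in $m$; this is precisely why the bound on $V_{n,i}$ at top order is absent from \eqref{eq:L2-m-Indpt}. The divergence identity $Q_{\alpha\beta}(v_j,v_k)=\del_\alpha(v_j\del_\beta v_k)-\del_\beta(v_j\del_\alpha v_k)$ is what saves the day: one estimates $V_{n,i}^\gamma$ instead, whose quadratic RHS $v_j\,\del v_k$ places only one factor at top order in $L^2$ while the companion $\del v$-factor lives at lower order in $L^\infty$. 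The price is the slow growth $t^\delta$, reflected by the prefactor $t^{-\delta}$ in \eqref{eq:L2-m-Indpt}. Putting everything together, the improved estimates sum to $C\bigl(\eps+(C_3\eps)^2\bigr)\leq \tfrac12 C_3\eps$, closing the bootstrap.
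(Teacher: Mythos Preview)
Your overall strategy---bootstrap plus Proposition~\ref{prop:Va-key}---matches the paper for $V_{c,i}$, $V_{n,i}$ (at low order), and $V_{n,i}^5$, but there are two genuine gaps where Proposition~\ref{prop:Va-key} is the wrong tool.

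\textbf{The $V_{m,i}$ piece.} Your claim $\|m^2 v_j v_k\|_{L^{6/5}}\lesssim t^{-1-\delta'}$ is false uniformly in $m$. The best available pointwise bound is $m^2|v_j v_k|\lesssim t^{-3}$ (using $m|v|\lesssim t^{-3/2}$ on both factors), and converting to $L^{6/5}$ over the support $\{|x|\le t\}$ costs a volume factor $t^{5/2}$, leaving only $t^{-1/2}$; no other splitting does better. So Proposition~\ref{prop:Va-key} would yield a bound growing like $t^{1/2}$. The paper instead uses the \emph{standard} flat energy estimate \eqref{eq:w-EE2}: one has
\[
\|m^2\,\del^IL^J(v_j v_k)\|_{L^2}\lesssim m\cdot\|\del^IL^J v\|_{L^2}\cdot\|m\,\del^{I'}L^{J'} v\|_{L^\infty}\lesssim m\,(C_2\eps)(C_1\eps)\,t^{-3/2+\delta},
\]
which integrates to $E_m(t,\del^IL^JV_{m,i})^{1/2}\lesssim m\,(C_1\eps)(C_2\eps)$, and then one extracts $\|\del^IL^J V_{m,i}\|_{L^2}\le m^{-1}E_m^{1/2}$ from the mass term in $E_m$. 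The extra factor of $m$ produced in $E_m^{1/2}$ is exactly what cancels the $m^{-1}$; this cancellation is invisible to Proposition~\ref{prop:Va-key}.

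\textbf{The $V_{n,i}^\gamma$ piece.} Your claim $\|v_j\,\del v_k\|_{L^{6/5}}\lesssim t^{-1+\delta'}$ is also false: the best one obtains is of order $1$ (for instance $\|v\|_{L^{6/5}}\|\del v\|_{L^\infty}\lesssim t\cdot t^{-1}$), so Proposition~\ref{prop:Va-key} would only give $\|V_{n,i}^\gamma\|_{L^2}\lesssim t$. More to the point, the quantity appearing in \eqref{eq:L2-m-Indpt} is $\|\del^IL^J\,\del V_{n,i}^\gamma\|_{L^2}$, a \emph{derivative} of $V_{n,i}^\gamma$, which sits directly in the natural energy $E_m$ with no mass-weight issue. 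The paper simply applies \eqref{eq:w-EE2} with
\[
\|\del^IL^J(v_j\,\del v_k)\|_{L^2}\lesssim \|\del^IL^J v\|_{L^2}\,\|\del\del^{I'}L^{J'} v\|_{L^\infty}\lesssim (C_2\eps)\,t^\delta\cdot (C_1\eps)\,t^{-1}
\]
and integrates to obtain the announced $t^\delta$ growth. Proposition~\ref{prop:Va-key} is neither needed nor adequate here.
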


We have the following results, which is a consequence of the pointwise decay estimate 
$$
\big| \del^I L^J \del v, \del \del^I L^J v \big|
\lesssim C_1 \eps t^{-1},
\qquad
|I| + |J| \leq N-2,
$$  
obtained in Section \ref{sec:existence}.
\begin{lemma}
We have for all $|I| + |J| \leq N-2$ that
\bel{eq:EE-12} 
E_m (t, \del^I L^J v)^{1/2}
\lesssim 
C_1 \eps t^{\delta/2}.
\ee
\end{lemma}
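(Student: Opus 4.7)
The plan is to apply the flat-slice energy estimate \eqref{eq:w-EE2} to the commuted quantity $\del^I L^J v_i$ and to absorb the loss of the wave decay rate $t^{-1}$ into the $t^{\delta/2}$ factor through a Gronwall argument. Since the fields $\del_\alpha$ and $L_a$ all commute with $\Box$ and trivially with the constant-coefficient mass term, acting with $\del^I L^J$ on \eqref{eq:model-mKG} yields
\[
-\Box \big( \del^I L^J v_i \big) + m^2 \del^I L^J v_i = \del^I L^J \big( N_i^{jk} Q_0(v_j,v_k) + M_i^{jk\alpha\beta} Q_{\alpha\beta}(v_j,v_k) \big),
\]
and Lemma \ref{lem:commu-null} rewrites the right-hand side as a sum of null forms $Q(\del^{I_1}L^{J_1}v_j,\del^{I_2}L^{J_2}v_k)$ with $|I_1|+|J_1|+|I_2|+|J_2|\leq |I|+|J| \leq N-2$.

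Next I would upgrade the hyperboloidal pointwise bound from Lemma \ref{lem:BA1-consqt} to a flat-slice pointwise bound $|\del \del^{I'}L^{J'}v_i| \lesssim C_1 \eps \, t^{-1}$ on $\Kcal$ for all $|I'|+|J'| \leq N-2$. Since the solution is supported in $\Kcal$, we have $|x|\leq t-1$, hence $s = \sqrt{t^2-|x|^2} \geq \sqrt{2t-1} \gtrsim t^{1/2}$, so $t/s \lesssim t^{1/2}$. Combining this with $|(s/t)\del \del^{I'}L^{J'}v_i| \lesssim C_1 \eps \, t^{-3/2}$ from Lemma \ref{lem:BA1-consqt} produces the claimed flat-slice pointwise decay.

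With these in hand, and using the crude bound $|Q_0(u,w)|+|Q_{\alpha\beta}(u,w)| \lesssim |\del u||\del w|$, I would split each resulting quadratic term so that the factor whose total $\del,L$ order is at most $(N-2)/2 \leq N-2$ is placed in $L^\infty(\RR^3)$ via the pointwise estimate above, while the other factor is placed in $L^2(\RR^3)$, where it is controlled by some $E_m(t,\del^{I'}L^{J'}v_i)^{1/2}$ with $|I'|+|J'| \leq N-2$. Setting
\[
A(t) := \max_i \sum_{|I|+|J|\leq N-2} E_m\big(t, \del^I L^J v_i\big)^{1/2},
\]
the flat energy estimate \eqref{eq:w-EE2} then gives
\[
A(t) \leq A(t_0) + C \int_{t_0}^{t} C_1 \eps \, (t')^{-1} A(t') \, dt'.
\]
Since the data obey $A(t_0) \lesssim \eps$, Gronwall's inequality produces $A(t) \lesssim \eps \, t^{C C_1 \eps}$, and the smallness assumption $C_1 \eps \ll \delta$ ensures $C C_1 \eps \leq \delta/2$, which yields \eqref{eq:EE-12}.

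The main conceptual point is that on flat slices we lack the $(s/t)$ factor that powered the null-form gain in Section \ref{sec:existence}, so the best rate extractable from the $L^\infty$ factor is the plain wave decay $t^{-1}$; fortunately we only need an arbitrarily small power growth $t^{\delta/2}$, which the Gronwall loop automatically produces. The restriction $|I|+|J| \leq N-2$ (rather than $\leq N$) is forced by the need to place one factor in $L^\infty$ via a pointwise bound, which is only available up to order $N-2$.
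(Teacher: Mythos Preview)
Your argument is correct and matches the paper's approach: the paper merely asserts that \eqref{eq:EE-12} follows from the flat pointwise decay $|\del \del^I L^J v|\lesssim C_1\eps\, t^{-1}$ (obtained from Lemma \ref{lem:BA1-consqt} via the $t/s\lesssim t^{1/2}$ observation you reproduce), and you have correctly filled in the implicit energy estimate plus Gronwall loop that turns this into the $t^{\delta/2}$ growth.
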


Since we are proving energy estimates for linear equations, we know the solutions already exist. We first prove the energy estimates for the low order cases. By the continuity of the energies, we assume the following bounds are valid for all $t \in [t_0, t_1)$
\bel{eq:BA2}
\aligned
\big\| \del^I L^J \big( V_{c, i}, V_{m, i}, V_{n, i}, V_{n, i}^5  \big) \big\|_{L^2(\RR^3)} 
+ t^{-\delta} \big\| \del^I L^J \del V_{n,i}^{\gamma} \big\|_{L^2(\RR^3)} 
&\leq C_2 \eps,
\qquad
|I| + |J| \leq N-3,
\\
\big\| \del^I L^J \big( V_{c, i}, V_{m, i}, V_{n, i}^5  \big) \big\|_{L^2(\RR^3)} 
+ t^{-\delta} \big\| \del^I L^J \del V_{n,i}^{\gamma} \big\|_{L^2(\RR^3)} 
&\leq C_2 \eps,
\qquad
|I| + |J| \leq N-2.
\endaligned
\ee
Similar to the definition of $s_1$ in Section \ref{sec:existence}, $t_1$ is defined by
\bel{eq:def-t1}
t_1 := \sup \{ t : t > t_0, ~\eqref{eq:BA2}~ holds \}.
\ee

A direct result from the bootstrap assumption \eqref{eq:BA2} is the following.

\begin{lemma}\label{lem:Vtov1}
For all $t \in [t_0, t_1)$ we have
\be 
\aligned
&\big\| \del^I L^J v \big\|_{L^2(\RR^3)}
\lesssim C_2 \eps,
\qquad
|I| + |J| \leq N-3,
\\
&\big\| \del^I L^J v \big\|_{L^2(\RR^3)}
\lesssim C_2 \eps t^\delta,
\qquad
|I| + |J| = N-2.
\endaligned
\ee
\end{lemma}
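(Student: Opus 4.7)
The plan is to invert the nonlinear transformation and treat the resulting quadratic remainder as a perturbation. Using the further decomposition $V_{n,i} = V_{n,i}^5 + \del_\gamma V_{n,i}^\gamma$, the identity $V_i = v_i + N_i^{jk} v_j v_k$ becomes
\[
v_i = V_{c,i} + V_{m,i} + V_{n,i}^5 + \del_\gamma V_{n,i}^\gamma - N_i^{jk} v_j v_k.
\]
After applying $\del^I L^J$ and taking the $L^2(\RR^3)$ norm, the statement reduces to bounding the $V$-contributions directly from the bootstrap assumption \eqref{eq:BA2} and controlling $\|\del^I L^J(v_j v_k)\|_{L^2}$ by a Leibniz expansion combined with the sup-norm decay from Lemma \ref{lem:h-est0}.

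For the linear contributions, $\|\del^I L^J (V_{c,i} + V_{m,i} + V_{n,i}^5)\|_{L^2} \lesssim C_2 \eps$ follows immediately from \eqref{eq:BA2}. For the divergence term I would commute $\del_\gamma$ through $\del^I L^J$ using Lemma \ref{lem:comm-est11}; each commutator preserves the total order but shifts one $\del$ onto $V_{n,i}^\gamma$, so the estimate on $\del V_{n,i}^\gamma$ in \eqref{eq:BA2} yields $\lesssim C_2 \eps t^\delta$ when $|I|+|J|=N-2$ and $\lesssim C_2\eps$ when $|I|+|J| \leq N-3$ (where the $V_{n,i}$ bound is available directly). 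For the quadratic remainder, the Leibniz rule produces a sum of products $\del^{I_1} L^{J_1} v_j \cdot \del^{I_2} L^{J_2} v_k$ with $|I_1|+|J_1|+|I_2|+|J_2| \leq N-2$; in each product I would place the factor of smaller total order in $L^\infty$ using $|\del^{I'} L^{J'} v| \lesssim C_1 \eps t^{-1/2}$ from Lemma \ref{lem:h-est0}, and put the other in $L^2$. When the $L^2$ factor has order strictly smaller than $|I|+|J|$ it is controlled by an induction on $|I|+|J|$; when it equals the top order, it is absorbed into the left-hand side using the smallness of $C_1 \eps t^{-1/2} \leq C_1\eps/\sqrt{2}$ for $t \geq t_0 = 2$.

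The delicate step will be the top-order case $|I|+|J| = N-2$, where the Leibniz expansion contains a term of the form $\|v\|_{L^\infty}\|\del^I L^J v\|_{L^2}$ that must be absorbed on the left uniformly in $t$, while at the same time the divergence $\del_\gamma V_{n,i}^\gamma$ must be commuted past $\del^I L^J$ without paying an extra derivative. The commutator identities of Lemma \ref{lem:comm-est11} are exactly what is needed to keep the bound at $C_2 \eps t^\delta$ rather than losing a derivative and exceeding the hypotheses. Choosing $C_2$ sufficiently large compared with $C_1$, and $C_1 \eps$ sufficiently small, then closes the argument with the stated constants.
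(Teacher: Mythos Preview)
Your proposal is correct and follows essentially the same route as the paper: invert the transformation $V_i = v_i + N_i^{jk} v_j v_k$, control the $V$-pieces directly from the bootstrap assumption \eqref{eq:BA2} (using the further splitting $V_{n,i} = V_{n,i}^5 + \del_\gamma V_{n,i}^\gamma$ at order $N-2$), and absorb the quadratic remainder $N_i^{jk} v_j v_k$ via the $L^\infty$ bound $|\del^{I'}L^{J'} v| \lesssim C_1\eps t^{-1/2}$ from Lemma~\ref{lem:h-est0} together with the smallness of $C_1\eps$. The only cosmetic difference is that the paper sums over all $|I|+|J|\leq N-3$ at once and absorbs in a single step, whereas you frame the lower-order $L^2$ factors as coming from an induction on $|I|+|J|$; both organizations close equally well.
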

\begin{proof}
For all $|I| + |J| \leq N-3$ we first have 
$$
\big\| \del^I L^J V \big\|_{L^2(\RR^3)}
\lesssim C_2 \eps,
$$
which simply follows from the relations
$$
V_i = V_{c,i} + V_{m,i} + V_{n,i},
\qquad
V_{n,i} = V_{n,i}^0 + \del_\gamma V_{n,i}^{\gamma},
$$
and the commutator estimate in Lemma \ref{lem:comm-est11}
$$
\big| [\del, L] u \big| \lesssim |\del u |.
$$

Next, we recall it holds that
$$
v_i 
= V_i - N_i^{jk} v_j v_k.
$$
Thus we get
$$
\aligned
&\sum_{|I| + |J| \leq N-3} \big\| \del^I L^J v_i \big\|_{L^2(\RR^3)}
\\
\lesssim
&\sum_{|I| + |J| \leq N-3} \big\| \del^I L^J V_i \big\|_{L^2(\RR^3)}
+
\sum_{|I| + |J| \leq N-3} \big\| \del^I L^J v_i \big\|_{L^2(\RR^3)} \sum_{|I| + |J| \leq  N-3} \big\| \del^I L^J v_i \big\|_{L^\infty(\RR^3)},
\endaligned
$$
and the pointwise estimate obtained in Lemma \ref{lem:h-est0} yields
$$
\sum_{|I| + |J| \leq N-3} \big\| \del^I L^J v_i \big\|_{L^2(\RR^3)}
\lesssim
\sum_{|I| + |J| \leq N-3} \big\| \del^I L^J V_i \big\|_{L^2(\RR^3)}
+
C_1 \eps t^{-1/2} \sum_{|I| + |J| \leq N-3} \big\| \del^I L^J v_i \big\|_{L^2(\RR^3)},
$$
and the smallness of $C_1 \eps$ allows us finally to obtain
$$
\sum_{|I| + |J| \leq N-3} \big\| \del^I L^J v_i \big\|_{L^2(\RR^3)}
\lesssim
\sum_{|I| + |J| \leq N-3} \big\| \del^I L^J V_i \big\|_{L^2(\RR^3)}.
$$

The bound for the case of $|I| + |J| = N-2$ can be derived in the same way, hence the proof is complete.
\end{proof}

We are going to derive the improved estimates under the bootstrap assumption of \eqref{eq:BA2}, and we first provide the improved estimates for low order cases.

\begin{proposition}\label{prop:low-refine}
Let the estimate in \eqref{eq:BA2} be true, then for any $t \in [t_0, t_1)$ we have
\be
\aligned
\big\| \del^I L^J \big( V_{c, i}, V_{m, i}, V_{n, i}, V_{n, i}^5  \big) \big\|_{L^2(\RR^3)} 
+ t^{-\delta} \big\| \del^I L^J \del V_{n,i}^{\gamma} \big\|_{L^2(\RR^3)} 
&\lesssim \eps + (C_2 \eps)^2,
\qquad
|I| + |J| \leq N-3,
\\
\big\| \del^I L^J \big( V_{c, i}, V_{m, i}, V_{n, i}^5  \big) \big\|_{L^2(\RR^3)} 
+ t^{-\delta} \big\| \del^I L^J \del V_{n,i}^{\gamma} \big\|_{L^2(\RR^3)} 
&\lesssim \eps + (C_2 \eps)^2,
\qquad
|I| + |J| \leq N-2.
\endaligned
\ee
\end{proposition}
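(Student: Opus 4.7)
The four sub-equations \eqref{eq:v-eq2}--\eqref{eq:v-eq6} are all linear Klein-Gordon equations in $V_{c,i}, V_{m,i}, V_{n,i}^5, V_{n,i}^\gamma$, so the plan is to commute $\del^I L^J$ through $-\Box+m^2$, bound the commuted source in $L^{6/5}$ (resp.\ in $L^2$), and then invoke either Proposition~\ref{prop:Va-key} to control $\|\del^I L^J V\|_{L^2}$ or the flat energy estimate \eqref{eq:w-EE2} to control $\|\del\del^I L^J V_{n,i}^\gamma\|_{L^2}$. The available inputs are the flat $L^2$ bounds $\|\del^I L^J v\|_{L^2}\lesssim C_2\eps$ of Lemma~\ref{lem:Vtov1}, the flat-slice energy $E_m(t,\del^I L^J v)^{1/2}\lesssim C_1\eps\, t^{\delta/2}$ of \eqref{eq:EE-12}, and the pointwise bounds $|\del^I L^J v|\lesssim C_1\eps\, t^{-1/2}$ (Lemma~\ref{lem:h-est0}) and $m|\del^I L^J v|+|\del\del^I L^J v|\lesssim C_1\eps\, t^{-1}$ (Lemma~\ref{lem:BA1-consqt} together with $t/s\lesssim t^{1/2}$), all valid on $[t_0,t_1)$ under the assumption \eqref{eq:BA2}.

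The homogeneous piece $V_{n,i}^5$ vanishes identically by \eqref{eq:v-eq5}. For the cubic $V_{c,i}$, the Leibniz expansion of $\del^I L^J$ applied to \eqref{eq:v-eq2} produces triple products of vector-field-decorated copies of $v$ and $\del v$; since $|I|+|J|\leq N-2$ and $N\geq 6$, at least two of the three factors have order at most $(N-2)/3$ and can be placed pointwise, while the single remaining high-order factor is absorbed either in $L^2$ from the bootstrap or in $L^{6/5}$ via compact support. The internal $Q_{\alpha\beta}$ affords an additional $t^{-1}$ through Lemma~\ref{lem:est-null-f}, and the internal $Q_0$ is handled using $|\del v|\lesssim t^{-1}$ directly; together these supply enough decay that the resulting source in $L^{6/5}$ is bounded by $C_f\,t^{-1-\sigma}$ with $\sigma>0$, so that Proposition~\ref{prop:Va-key} returns a uniform-in-$t$ bound of the required form.

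The piece $V_{m,i}$ is the most delicate: the source $-m^2 N_i^{jk} v_j v_k$ carries an explicit $m^2$ that must be split as $m\cdot m$, one factor consumed by the pointwise bound $m|\del^I L^J v|\lesssim C_1\eps\, t^{-3/2}$ and the other by a mass-weighted $L^{6/5}$ bound on the second factor. Concretely we split
\[
\bigl\|m^2\,\del^{I_1}L^{J_1}v_j\,\del^{I_2}L^{J_2}v_k\bigr\|_{L^{6/5}}
\lesssim
m\bigl\|\del^{I_1}L^{J_1}v_j\bigr\|_{L^\infty}\cdot m\bigl\|\del^{I_2}L^{J_2}v_k\bigr\|_{L^{6/5}},
\]
controlling the second factor through the compactly supported estimate $m\|w\|_{L^{6/5}}\lesssim m\,t\,\|w\|_{L^2}$ combined with the $L^2$ bound from Lemma~\ref{lem:Vtov1}, so that a careful tally of $m$-powers ensures the resulting integrand in Proposition~\ref{prop:Va-key} is strictly faster than $t^{-1}$; this then yields the uniform bound $\|\del^I L^J V_{m,i}\|_{L^2}\lesssim \eps+(C_2\eps)^2$.

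Finally, for $V_{n,i}^\gamma$, the source $M_i^{jk\gamma\beta}v_j\del_\beta v_k$ no longer carries the null structure (that was already spent to extract the divergence), but the derivative decay $|\del v|\lesssim C_1\eps\,t^{-1}$ combined with $\|v\|_{L^2}\lesssim C_2\eps$ gives $\|\del^I L^J(v\,\del v)\|_{L^2}\lesssim C_1C_2\eps^2\,t^{-1}$ after placing the $\del$-factor pointwise whenever its order is small and the $v$-factor in $L^2$ otherwise. The flat energy estimate \eqref{eq:w-EE2} integrates this bound to $\log t\lesssim t^\delta$, which matches the weight $t^{-\delta}$ built into the bootstrap. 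The main obstacle throughout is the $V_{m,i}$ estimate: the $m$-bookkeeping must be arranged so that no single Hölder factor carries an uncompensated $m^{-1}$ or uncompensated $t^{1/2}$ loss, since any such loss would destroy the mass-independent uniform-in-$t$ bound that Proposition~\ref{prop:Va-key} is designed to provide and would break the closure of the bootstrap \eqref{eq:BA2}.
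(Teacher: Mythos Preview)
Your treatment of $V_{m,i}$ via Proposition~\ref{prop:Va-key} does not close. With your splitting,
\[
\bigl\|m^2\,\del^{I_1}L^{J_1}v_j\,\del^{I_2}L^{J_2}v_k\bigr\|_{L^{6/5}}
\lesssim \bigl(m\|\del^{I_1}L^{J_1}v_j\|_{L^\infty}\bigr)\cdot m\,t\,\|\del^{I_2}L^{J_2}v_k\|_{L^2}
\lesssim C_1\eps\,t^{-3/2}\cdot m\,t\cdot C_2\eps\,t^{\delta}
= m\,C_1C_2\eps^2\,t^{-1/2+\delta},
\]
which is \emph{not} strictly faster than $t^{-1}$; feeding this into Proposition~\ref{prop:Va-key} yields $\|\del^I L^J V_{m,i}\|_{L^2}\lesssim m\,C_1C_2\eps^2\,t^{1/2+\delta}$, which blows up in $t$ for every fixed $m>0$. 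No rearrangement of the H\"older exponents helps: even putting both factors in $L^\infty$ and paying the volume factor $t^{5/2}$ gives $(C_1\eps)^2 t^{-1/2}$ in $L^{6/5}$. The point is that Proposition~\ref{prop:Va-key} produces $\|u\|_{L^2}$ directly and has no mechanism to trade a leftover $m$ for $t$-decay.

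The paper's argument for $V_{m,i}$ is genuinely different: it uses the \emph{flat energy estimate} \eqref{eq:w-EE2} rather than Proposition~\ref{prop:Va-key}. One bounds
\[
E_m(t,\del^I L^J V_{m,i})^{1/2}
\le \int_{t_0}^t \|m^2\,\del^I L^J(v_jv_k)\|_{L^2}\,dt'
\lesssim m\int_{t_0}^t \|\del^{I'}L^{J'}v\|_{L^2}\,\|m\,\del^{I''}L^{J''}v\|_{L^\infty}\,dt'
\lesssim m\,C_1C_2\eps^2,
\]
using $\|m\,\del^{I''}L^{J''}v\|_{L^\infty}\lesssim C_1\eps\,t^{-3/2}$ and $\|\del^{I'}L^{J'}v\|_{L^2}\lesssim C_2\eps\,t^{\delta}$. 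The crucial step is then $m\|\del^I L^J V_{m,i}\|_{L^2}\le E_m^{1/2}\lesssim m\,C_1C_2\eps^2$, so the surviving $m$ cancels. In other words, the mass term in $E_m$ is what absorbs the second factor of $m$; Proposition~\ref{prop:Va-key} has no such term and cannot play this role.

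A secondary omission: you never treat $V_{n,i}$ itself (only $V_{n,i}^5$ and $\del V_{n,i}^\gamma$), and the decomposition $V_{n,i}=V_{n,i}^5+\del_\gamma V_{n,i}^\gamma$ only yields $\|\del^I L^J V_{n,i}\|_{L^2}\lesssim t^{\delta}$, not the required uniform bound for $|I|+|J|\le N-3$. The paper handles $V_{n,i}$ directly via Proposition~\ref{prop:Va-key}, using Lemma~\ref{lem:est-null-f} on $Q_{\alpha\beta}$ to gain $t^{-1}$ and obtain $\|\del^I L^J(M_i^{jk\alpha\beta}Q_{\alpha\beta}(v_j,v_k))\|_{L^{6/5}}\lesssim C_1C_2\eps^2\,t^{-4/3+\delta}$.
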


\begin{proof}

In terms of the features of each term, we estimate them one by one.

\textbf{Estimate for $\big\| \del^I L^J V_{m, i} \big\|_{L^2(\RR^3)}$.}
We start with the easy one, and the energy estimate for the $\del^I L^J V_{m, i}$ equation gives
$$
E_m (t, \del^I L^J V_{m, i})^{1/2}
\leq
\int_{t_0}^t \big\| \del^I L^J F_{V_{m, i}} \big\|_{L^2(\RR^3)} \, dt',
$$
with
$$
F_{V_{m, i}} = - m^2 N_i^{jk} v_j v_k.
$$
Recall the $m$ dependent pointwise estimate
$$
m \big| \del^I L^J v \big| \lesssim C_1 \eps t^{-3/2},
$$
and we get
$$
\aligned
E_m (t, \del^I L^J V_{m, i})^{1/2}
\lesssim
&m \int_{t_0}^t \sum_{|I| + |J| \leq N-2} \big\| \del^I L^J v \big\|_{L^2(\RR^3)} \sum_{|I| + |J| \leq N-2} \big\| m \del^I L^J v \big\|_{L^\infty (\RR^3)}  \, dt'
\\
\lesssim
&m C_1 \eps C_2 \eps \int_{t_0}^t t'^{-3/2 + \delta} \, dt'
\lesssim  m C_1 \eps C_2 \eps.
\endaligned
$$
Hence the definition of the energy $E_m$ implies that
$$
\big\| \del^I L^J V_{m, i} \big\|_{L^2(\RR^3)} 
\lesssim C_1 \eps C_2 \eps.
$$

\textbf{Estimates for $\big\| \del^I L^J V_{c, i} \big\|_{L^2(\RR^3)}$, $\big\| \del^I L^J V_{n, i} \big\|_{L^2(\RR^3)}$, $\big\| \del^I L^J V_{n, i}^5 \big\|_{L^2(\RR^3)}$.}
Since the procedure is the same when estimating these three solutions, we gather the proof here.

For the equations with fast-decay nonlinearities, which is the situation now, we rely on Proposition \ref{prop:Va-key} to obtain the $m$ independent $L^2$ norm bounds. Thus it suffices to estimate $\| \text{nonlinearities} \|_{L^{6/5}(\RR^3)}$.

We find for estimating $\big\| \del^I L^J V_{c, i} \big\|_{L^2(\RR^3)}$ it suffices to show that
$$
\aligned
& \sum_{|I| + |J| \leq N-2} \Big\| \del^I L^J \Big( N_i^{jk} v_k \big( N_j^{mn} Q_0 (v_m, v_n) + M_j^{mn\alpha\beta} Q_{\alpha\beta} (v_m, v_n) \big) 
\\
&\hskip2cm + N_i^{jk} v_j \big( N_k^{mn} Q_0(v_m, v_n) + M_k^{mn\alpha\beta} Q_{\alpha\beta} (v_m, v_n) \big) \Big) \Big\|_{L^{6/5}(\RR^3)}
\\
\lesssim
& \sum_{|I| + |J| \leq N-2} \big\| \del^I L^J v \big\|_{L^2(\RR^3)} \sum_{|I| + |J| \leq N-2} \big\| \del^I L^J \del v \big\|_{L^6(\RR^3)}^2
\\
\lesssim
& C_2 \eps t^\delta \sum_{|I| + |J| \leq N-2} \big\| \del^I L^J \del v \big\|_{L^2}^{2/3} \sum_{|I| + |J| \leq N-2} \big\| \del^I L^J \del v \big\|_{L^\infty}^{4/3}
\\
\lesssim
& C_2 \eps (C_1 \eps)^2 t^{-4/3 + 2 \delta},
\endaligned
$$ 
in which we used the estimate \eqref{eq:EE-12}.

Similarly, in order to estimate $\big\| \del^I L^J V_{n, i} \big\|_{L^2(\RR^3)}$, we need to demonstrate that
$$
\aligned
& \sum_{|I| + |J| \leq N-3} \big\| \del^I L^J \big(M_i^{jk\alpha\beta} Q_{\alpha\beta} (v_j, v_k) \big) \|_{L^{6/5}(\RR^3)}
\\
\lesssim
& t^{-1} \sum_{|I| + |J| \leq N-2} \big\| \del^I L^J v \big\|_{L^2(\RR^3)} \sum_{|I| + |J| \leq N-3} \big\| \del \del^I L^J v \big\|_{L^3(\RR^3)}
\\
\lesssim
& C_2 \eps t^{-1 + \delta} \sum_{|I| + |J| \leq N-3} \big\| \del \del^I L^J v \big\|_{L^2(\RR^3)}^{2/3} \sum_{|I| + |J| \leq N-3} \big\| \del \del^I L^J v \big\|_{L^\infty(\RR^3)}^{1/3}
\\
\lesssim
& C_2 \eps C_1 \eps t^{-4/3 + \delta}.
\endaligned
$$

The estimate for $\big\| \del^I L^J V_{n, i}^5 \big\|_{L^2(\RR^3)}$ with $|I| + |J| \leq N-2$ is trivial according to Proposition \ref{prop:Va-key}, since the equation is homogeneous.

\textbf{Estimate for $\big\| \del^I L^J \del V_{n,i}^{\gamma}\big\|_{L^2(\RR^3)}$.}
We utilise the energy estimate for the equation of $\del^I L^J \del V_{n,i}^{\gamma}$ with $|I| + |J| \leq N-2$ to get
$$
\aligned
E_m (t, \del^I L^J V_{n,i}^{\gamma})^{1/2}
\leq 
&\int_{t_0}^t \big\| \del^I L^J \big( M_i^{jk\gamma\beta} v_j \del_\beta v_k - M_i^{jk\alpha\gamma} v_j \del_\alpha v_k \big) \big\|_{L^2(\RR^3)} \, dt'
\\
\lesssim
&\int_{t_0}^t \sum_{|I| + |J| \leq N-2} \big\| \del^I L^J v \big\|_{L^2(\RR^3)} \sum_{|I| + |J| \leq N-2} \big\| \del \del^I L^J v \big\|_{L^\infty(\RR^3)} \, dt'
\\
\lesssim
& \int_{t_0}^t C_2 \eps t'^\delta C_1 \eps t'^{-1} \, dt'
\lesssim C_2 \eps C_1 \eps t^\delta.
\endaligned
$$
By the definition of $E_m$ and the commutator estimates, we deduce
$$
\big\| \del^I L^J \del V_{n,i}^{\gamma}\big\|_{L^2(\RR^3)}
\lesssim C_2 \eps C_1 \eps t^\delta,
\qquad
|I| + |J| \leq N-2.
$$

\end{proof}

According to the refined bounds in Proposition \ref{prop:low-refine}, we easily know that the estimates in \eqref{eq:BA2} are true for all $t \in [t_0, + \infty)$ after carefully choosing $C_2$ large enough and $\eps$ sufficiently small (we might shrink the choice of $\eps$ in Proof \ref{proof:existence} if needed, and nothing else is affected). 

The Klainerman-Sobolev inequality \eqref{eq:Va-K-S2} together with \eqref{eq:BA2} provides the following mass independent results
\bel{eq:m-Idpdt-pointwise}
\aligned
&\big\| \del^I L^J v \big\|_{L^2(\RR^3)}
\lesssim C_2 \eps,
\qquad
|I| + |J| \leq N-3,
\\
&\big\| \del^I L^J v \big\|_{L^2(\RR^3)}
\lesssim C_2 \eps t^\delta,
\qquad
|I| + |J| = N-2,
\\
&\big\| \del^I L^J v \big\|_{L^\infty(\RR^3)}
\lesssim C_2 \eps t^{-1},
\qquad
|I| + |J| \leq N-6,
\endaligned
\ee
which are valid for all $t \in [t_0, +\infty)$.

Next, to proceed to prove the bounds of high order energies in Proposition \ref{prop:L2-m-Indpt}, we make new bootstrap assumptions for $t \in [t_0, t_2)$ (and for all $i$)
\bel{eq:BA3}
\aligned
\big\| \del^I L^J \big( V_{c, i}, V_{m, i}, V_{n, i}, V_{n, i}^5  \big) \big\|_{L^2(\RR^3)} 
+ t^{-\delta} \big\| \del^I L^J \del V_{n,i}^{\gamma} \big\|_{L^2(\RR^3)} 
&\leq C_3 \eps,
\qquad
|I| + |J| \leq N-1,
\\
\big\| \del^I L^J \big( V_{c, i}, V_{m, i}, V_{n, i}^5  \big) \big\|_{L^2(\RR^3)} 
+ t^{-\delta} \big\| \del^I L^J \del V_{n,i}^{\gamma} \big\|_{L^2(\RR^3)} 
&\leq C_3 \eps,
\qquad
|I| + |J| \leq N.
\endaligned
\ee
Similar to the definition of $t_1$, $t_2$ is defined by
\bel{eq:def-t2}
t_2 := \sup \{ t : t > t_0, ~\eqref{eq:BA3}~ holds \}.
\ee
Recall that $C_3 \geq C_2$ is to be determined.

We have the following refined estimates for high order cases.

\begin{proposition}\label{prop:high-refine}
Assuming the estimate in \eqref{eq:BA3} be true, then for all $t \in [t_0, t_2)$ we have
\be
\aligned
\big\| \del^I L^J \big( V_{c, i}, V_{m, i}, V_{n, i}, V_{n, i}^5  \big) \big\|_{L^2(\RR^3)} 
+ t^{-\delta} \big\| \del^I L^J \del V_{n,i}^{\gamma} \big\|_{L^2(\RR^3)} 
&\lesssim \eps + (C_3 \eps)^2,
\qquad
|I| + |J| \leq N-1,
\\
\big\| \del^I L^J \big( V_{c, i}, V_{m, i}, V_{n, i}^5  \big) \big\|_{L^2(\RR^3)} 
+ t^{-\delta} \big\| \del^I L^J \del V_{n,i}^{\gamma} \big\|_{L^2(\RR^3)} 
&\lesssim \eps + (C_3 \eps)^2,
\qquad
|I| + |J| \leq N.
\endaligned
\ee
\end{proposition}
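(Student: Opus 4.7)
The plan is to follow the template of Proposition \ref{prop:low-refine} piece by piece, but with careful redistribution of derivatives so that at the top orders one factor always lands in a regime where pointwise bounds are available. First, I would transfer the bootstrap bounds \eqref{eq:BA3} on the $V$'s to $L^2$ bounds on $\del^I L^J v$ via the nonlinear transformation $v_i = V_i - N_i^{jk}v_j v_k$ and the decomposition $V_i = V_{c,i} + V_{m,i} + V_{n,i}^5 + \del_\gamma V_{n,i}^\gamma$ (commuting $\del_\gamma$ with $\del^I L^J$ using Lemma \ref{lem:comm-est11}). The quadratic correction $\|\del^I L^J(v_jv_k)\|_{L^2}$ at orders $N-1$ and $N$ is handled by Moser-type Leibniz expansion: the $L^\infty$ factor is placed on the piece with $\leq (N-1)/2$ or $\leq N/2$ derivatives, bounded via the already-established $m$-independent pointwise estimate $\|\del^I L^J v\|_{L^\infty} \lesssim C_2 \eps t^{-1}$ from \eqref{eq:m-Idpdt-pointwise} (where sufficient, that is, at orders $\leq N-6$), and via Klainerman-Sobolev \eqref{eq:Va-K-S2} combined with the $L^2$ bounds from the current bootstrap at the slightly higher orders. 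The smallness of $C_2 \eps t^{-1}$ absorbs the resulting self-coupling, producing $\|\del^I L^J v\|_{L^2} \lesssim C_3 \eps$ at $|I|+|J| \leq N-1$ and $\lesssim C_3 \eps t^\delta$ at $|I|+|J|=N$.

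Second, I would bound $V_{m,i}$ directly by the flat-slice energy inequality \eqref{eq:w-EE2}. The right-hand side carries a free $m^2$, so Leibniz splitting of $\del^I L^J(v_jv_k)$ combined with the mass-dependent pointwise bound $m|\del^I L^J v| \lesssim C_1 \eps t^{-3/2}$ from Lemma \ref{lem:BA1-consqt} yields an $L^2$ norm of order $m \cdot C_1 \eps \cdot C_3 \eps \cdot t^{-3/2+\delta}$, which is integrable in $t$ and absorbed into the $m\|\del^I L^J V_{m,i}\|_{L^2}$ contribution of $E_m$. For $V_{c,i}$, $V_{n,i}$, and $V_{n,i}^5$, I would invoke Proposition \ref{prop:Va-key}: the cubic RHS of \eqref{eq:v-eq2} has enough decay after splitting $L^2 \cdot L^3 \cdot L^6$ with Gagliardo-Nirenberg interpolation to produce a $t^{-1+q}$ bound with $q<0$; the null form RHS of \eqref{eq:v-eq4} gains its $t^{-1}$ via the classical hyperboloidal estimate of Lemma \ref{lem:est-null-f} with one $L^\infty$ factor at low order; and $V_{n,i}^5$ is homogeneous. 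The use of Proposition \ref{prop:Va-key} requires the $L^{6/5}$ norm of the nonlinearity, which is available from the compact spatial support and Hölder.

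Third, for $\del^I L^J \del V_{n,i}^\gamma$ I would apply the standard energy inequality \eqref{eq:w-EE2} to the equation \eqref{eq:v-eq6}. The RHS is a semilinear product $v \cdot \del v$; placing $\|v\|_{L^\infty}$ or $\|\del v\|_{L^\infty}$ on the low-order factor (at $\leq N-4$, accessible via Klainerman-Sobolev from the current bootstrap) and the high-order factor in $L^2$ yields an integrand $\lesssim C_2 \eps C_3 \eps t^{-1+\delta}$, whose time integral is $\lesssim C_3 \eps t^\delta$, matching the bootstrap slack. After all four estimates are assembled, choosing $C_3$ large and $\eps$ small closes the bootstrap via the standard continuity argument.

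The main obstacle is the top-order estimate $|I|+|J|=N$: at this level, neither $V_{n,i}$ (the $Q_{\alpha\beta}$ bundle) can be controlled directly, since the naive estimate $|Q_{\alpha\beta}| \lesssim t^{-1}|Lv||\del v|$ forces top-order $L$ onto one factor and loses the needed decay, nor can one simply put $\|\del^I L^J v\|_{L^\infty}$ on a low-order factor without losing $m^{-1}$ or $t^\delta$. The resolution, built into the bootstrap hierarchy \eqref{eq:BA3} itself, is to drop $V_{n,i}$ at order $N$ and replace it by the divergence decomposition $V_{n,i}^5 + \del_\gamma V_{n,i}^\gamma$, which shifts one derivative out of the nonlinearity into the solution and thereby buys the missing $t^{-1}$; the price is the $t^\delta$ loss on $\|\del^I L^J \del V_{n,i}^\gamma\|_{L^2}$, which we must carry throughout the argument and must verify does not contaminate the closed bounds on the other pieces.
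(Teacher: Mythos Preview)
Your proposal is correct and follows exactly the paper's approach; in fact the paper omits the proof entirely, stating only that the argument of Proposition~\ref{prop:low-refine} applies verbatim, so your account is more detailed than the original. One small correction: in the cubic $L^{6/5}$ estimate for $V_{c,i}$ the H\"older split should be $L^2\cdot L^6\cdot L^6$ (as in the paper's low-order proof), not $L^2\cdot L^3\cdot L^6$, since $\tfrac12+\tfrac13+\tfrac16=1$ lands in $L^1$ rather than $L^{6/5}$.
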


The proof for Proposition \ref{prop:low-refine} also applies to Proposition \eqref{prop:high-refine}, so we omit it. Also, similarly, we can choose $C_3$ large enough, and $\eps$ sufficiently small (we shrink it further if needed), so that we can improve the estimates in \eqref{eq:BA3} with a factor $1/2$ in front of the original bounds. And this indicates that the estimates in \eqref{eq:BA3} are valid for all $t \in [t_0, + \infty)$.

The proof for Proposition \ref{prop:L2-m-Indpt} follows from the established estimates in \eqref{eq:BA2} and \eqref{eq:BA3}, which are valid for all $t \in [t_0, +\infty)$.

\subsection{The mass independent wave decay for the solution}

With the estimates built in Proposition \ref{prop:L2-m-Indpt}, we have the following results for the original solution $v = (v_i)$.

\begin{lemma}\label{lem:Vtov2}
For all $t \in [t_0, +\infty)$ we have
\be 
\aligned
&\big\| \del^I L^J v \big\|_{L^2(\RR^3)}
\lesssim C_3 \eps,
\qquad
|I| + |J| \leq N-1,
\\
&\big\| \del^I L^J v \big\|_{L^2(\RR^3)}
\lesssim C_3 \eps t^\delta,
\qquad
|I| + |J| = N.
\endaligned
\ee
\end{lemma}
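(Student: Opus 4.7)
The plan is to mirror the strategy used in Lemma \ref{lem:Vtov1}, but now carried up to the top order $N$, and to carefully handle the divergence-form decomposition $V_{n,i} = V_{n,i}^5 + \del_\gamma V_{n,i}^\gamma$ which only yields $L^2$ bounds on $\del V_{n,i}^\gamma$ rather than on $V_{n,i}^\gamma$ itself. First I would reconstruct $L^2$ bounds on $\del^I L^J V_i$ from the pieces. For $|I|+|J|\leq N-1$, the decomposition $V_i = V_{c,i}+V_{m,i}+V_{n,i}$ together with the estimates in \eqref{eq:BA3} immediately gives $\|\del^I L^J V_i\|_{L^2} \lesssim C_3\eps$. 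For $|I|+|J|=N$, however, the bound on $V_{n,i}$ is no longer direct: I would write $\del^I L^J V_{n,i} = \del^I L^J V_{n,i}^5 + \del^I L^J \del_\gamma V_{n,i}^\gamma$ and commute $\del_\gamma$ past $\del^I L^J$ using Lemma \ref{lem:comm-est11}. This produces terms of the form $\del^{I'} L^{J'} \del V_{n,i}^\gamma$ with $|I'|+|J'|\leq N$, all controlled by $C_3\eps\, t^\delta$ thanks to the second line of \eqref{eq:BA3}. Combined with the bound on $V_{n,i}^5$, this yields $\|\del^I L^J V_i\|_{L^2}\lesssim C_3\eps\, t^\delta$ for $|I|+|J|=N$.

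Next I would transfer these bounds from $V_i$ back to $v_i$ using the nonlinear relation $v_i = V_i - N_i^{jk} v_j v_k$. Applying $\del^I L^J$ and using the Leibniz rule gives
$$
\del^I L^J v_i = \del^I L^J V_i - \sum_{\substack{I_1+I_2=I\\ J_1+J_2=J}} c_{I_1J_1}^{I_2J_2}\, N_i^{jk} \del^{I_1} L^{J_1} v_j \cdot \del^{I_2} L^{J_2} v_k.
$$
For each product term I would always place the higher-order factor (at least $\lceil (|I|+|J|)/2 \rceil$ vector fields) in $L^2$ and the lower-order factor in $L^\infty$. Since $N\geq 6$, the lower-order factor carries at most $N-6$ vector fields and so falls within the range where the pointwise estimate $\|\del^{I'} L^{J'} v\|_{L^\infty} \lesssim C_2\eps\, t^{-1}$ from \eqref{eq:m-Idpdt-pointwise} applies.

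Collecting these estimates gives, schematically,
$$
\sum_{|I|+|J|\leq K} \|\del^I L^J v_i\|_{L^2}
\lesssim \sum_{|I|+|J|\leq K}\|\del^I L^J V_i\|_{L^2}
+ C_2 \eps\, t^{-1}\sum_{|I|+|J|\leq K} \|\del^I L^J v_i\|_{L^2},
$$
for either $K=N-1$ (with RHS $\lesssim C_3\eps$) or $K=N$ (with RHS $\lesssim C_3\eps\, t^\delta$). The smallness of $C_2\eps$ (and the fact that $t\geq 2$) lets me absorb the last term on the right into the left-hand side, yielding the desired bounds.

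The main technical obstacle is the top-order $|I|+|J|=N$ case, where I must lose one derivative on $V_{n,i}^\gamma$ to exploit its divergence form; this is exactly the source of the $t^\delta$ growth in the second line of the statement. Beyond this, the proof is essentially the same as Lemma \ref{lem:Vtov1}: the only ingredients used are the commutator estimates, the decomposition $V_i = V_{c,i}+V_{m,i}+V_{n,i}^5+\del_\gamma V_{n,i}^\gamma$, Proposition \ref{prop:L2-m-Indpt}, and the pointwise bound \eqref{eq:m-Idpdt-pointwise}. One minor remark is that the absorption step requires $C_2\eps$ to be sufficiently small relative to the implicit constants, which may force a further shrinkage of $\eps_0$; since this does not affect any earlier argument, it is harmless.
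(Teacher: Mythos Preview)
Your strategy is exactly the one the paper intends (it simply says the proof follows that of Lemma~\ref{lem:Vtov1}), and your handling of the $V_{n,i}$ piece at top order via the divergence decomposition $V_{n,i}=V_{n,i}^5+\del_\gamma V_{n,i}^\gamma$ is correct. However, there is a derivative-counting error in your absorption step. You assert that ``the lower-order factor carries at most $N-6$ vector fields'', but for $|I|+|J|=N$ the smaller factor in a Leibniz split can carry up to $\lfloor N/2\rfloor$ derivatives, and $\lfloor N/2\rfloor\leq N-6$ holds only for $N\geq 12$, whereas the paper assumes only $N\geq 6$. Thus the pointwise bound $\|\del^{I'}L^{J'}v\|_{L^\infty}\lesssim C_2\eps\,t^{-1}$ from \eqref{eq:m-Idpdt-pointwise}, valid only for $|I'|+|J'|\leq N-6$, does not cover all the low-order factors you need when $N$ is small (for $N=6$ it gives control at order $0$ only, while you need up to order $3$).

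The fix is precisely what is done in the proof of Lemma~\ref{lem:Vtov1}: invoke instead the weaker pointwise estimate $|\del^{I'}L^{J'}v|\lesssim C_1\eps\,t^{-1/2}$ from Lemma~\ref{lem:h-est0}, which is valid for all $|I'|+|J'|\leq N-2$ and hence covers the low-order factor (since $\lfloor N/2\rfloor\leq N-2$ for $N\geq 4$). The decay $t^{-1/2}$ is still ample for absorption, because $C_1\eps\,t^{-1/2}\ll 1$. With this single adjustment your argument is complete and matches the paper's.
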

The proof for Lemma \ref{lem:Vtov2} follows from the proof for Lemma \ref{lem:Vtov1}.

Next, we apply the Klainerman-Sobolev inequality \eqref{eq:Va-K-S2} to arrive at the mass independent pointwise decay results.

\begin{proposition}\label{prop:m-Indpt-p}
It holds that
\be 
\aligned
&\big\| \del^I L^J v \big\|_{L^\infty(\RR^3)}
\lesssim C_3 \eps t^{-1},
\qquad
|I| + |J| \leq N-4,
\\
&\big\| \del^I L^J v \big\|_{L^\infty(\RR^3)}
\lesssim C_3 \eps t^{-1 + \delta},
\qquad
|I| + |J| = N-3.
\endaligned
\ee
\end{proposition}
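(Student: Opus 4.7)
The plan is to prove Proposition \ref{prop:m-Indpt-p} by a direct application of the Klainerman-Sobolev inequality (Proposition \ref{prop:K-S2}) to the function $\del^I L^J v_i$, and then invoke the $L^2$-bounds for $v$ from Lemma \ref{lem:Vtov2}. Concretely, first I would apply \eqref{eq:Va-K-S2} to $u = \del^I L^J v_i$, which gives
$$
\big| \del^I L^J v_i(t, x) \big|
\lesssim t^{-1} \sup_{t_0 \leq t' \leq t_0 + 2t,\ |I'| + |J'| \leq 3} \big\| \del^{I'} L^{J'} \del^I L^J v_i \big\|_{L^2(\RR^3)}.
$$
The recall that $v = (v_i)$ is supported in $\Kcal$ at every time, so Proposition \ref{prop:K-S2} indeed applies.

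Next, I would convert $\del^{I'} L^{J'} \del^I L^J$ into a sum of expressions of the form $\del^{\widetilde I} L^{\widetilde J}$ with $|\widetilde I| + |\widetilde J| \leq |I| + |J| + |I'| + |J'| \leq |I| + |J| + 3$, using the commutator estimates in Lemma \ref{lem:comm-est11} (the relevant ones being $|[\del_\alpha, L_a] u | \lesssim \sum_\beta |\del_\beta u|$ and $|[L_a, L_b] u| \lesssim \sum_d |L_d u|$). Iterated commutation produces only strictly lower-order terms of the same type, so modulo combinatorial constants one obtains
$$
\sum_{|I'| + |J'| \leq 3} \big\| \del^{I'} L^{J'} \del^I L^J v_i \big\|_{L^2(\RR^3)}
\lesssim
\sum_{|\widetilde I| + |\widetilde J| \leq |I| + |J| + 3} \big\| \del^{\widetilde I} L^{\widetilde J} v_i \big\|_{L^2(\RR^3)}.
$$

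Finally, I split into two cases and read off the bound from Lemma \ref{lem:Vtov2}. If $|I| + |J| \leq N-4$, then $|\widetilde I| + |\widetilde J| \leq N-1$, so the right-hand side is $\lesssim C_3 \eps$, uniformly in $t' \in [t_0, t_0 + 2t]$, giving the decay $t^{-1}$. If $|I| + |J| = N-3$, then $|\widetilde I| + |\widetilde J| \leq N$, so the $L^2$-bound is $\lesssim C_3 \eps (t')^{\delta} \lesssim C_3 \eps t^{\delta}$ on the interval $[t_0, t_0 + 2t]$, producing the claimed $t^{-1+\delta}$. There is no genuine obstacle here: all the hard analysis has already been done in Proposition \ref{prop:L2-m-Indpt} and Lemma \ref{lem:Vtov2}; the statement is a clean consequence via Klainerman-Sobolev and the commutator lemma.
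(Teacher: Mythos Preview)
Your proposal is correct and matches the paper's approach: the paper simply states that one applies the Klainerman--Sobolev inequality \eqref{eq:Va-K-S2} to pass from the $L^2$ bounds of Lemma \ref{lem:Vtov2} to the pointwise decay, and you have supplied exactly those details, including the commutator step needed to reduce $\del^{I'} L^{J'} \del^I L^J$ to a sum of $\del^{\widetilde I} L^{\widetilde J}$ with $|\widetilde I|+|\widetilde J|\leq |I|+|J|+3$.
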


We now establish \eqref{eq:m-decay} in Theorem \ref{thm:main-mKG}.

\begin{proof}[Proof of \eqref{eq:m-decay}]
From Proposition \ref{prop:m-Indpt-p}, we obtain the mass independent pointwise decay result
$$
\big| v_i \big| 
\lesssim t^{-1},
$$
while the estimates in Lemma \ref{lem:BA1-consqt} give us the mass dependent Klein-Gordon decay
$$
\big| v_i \big| 
\lesssim m^{-1} t^{-3/2},
\qquad
m \in (0, 1].
$$
Combine these two kinds of decay bounds, we are led to 
\bel{eq:m-Indpdt-decay}
\big| v_i \big| 
\lesssim 
{1 \over t + m t^{3/2}},
\qquad
m \in (0, 1].
\ee
But we see \eqref{eq:m-Indpdt-decay} is obviously true for the case of $m = 0$ (because it is just the case of wave equations), and hence the proof for \eqref{eq:m-decay} is complete.

\end{proof}

%==========================================================================================================

\section{Proof for the covergence result}\label{sec:converge}

With the global existence result and the unified pointwise decay result prepared in the last two sections,
we now want to build the convergence result when the mass parameter $m$ goes to 0. 

\begin{proof}[Proof of Theorem \ref{thm:converge}]
%\textbf{Low order energy case.}
We take the difference between the equation of $v_i^{(m)}$ and the equation of $v_i^{(0)}$ to have
$$
\aligned
&- \Box \big( v_i^{(m)} - v_i^{(0)} \big) + m^2 \big( v_i^{(m)} - v_i^{(0)} \big)
\\
=
& - m^2 v_i^{(0)}
+ N^{jk}_i Q_0 \big(v_j^{(m)} - v_j^{(0)}, v_k^{(m)} \big) 
+ N^{jk}_i Q_0 \big(v_j^{(0)}, v_k^{(m)} - v_k^{(0)} \big) 
\\
&+ M^{jk \alpha \beta}_i Q_{\alpha \beta} \big(v_j^{(m)} - v_j^{(0)}, v_k^{(m)} \big) 
+ M^{jk \alpha \beta}_i Q_{\alpha \beta} \big(v_j^{(0)}, v_k^{(m)} - v_k^{(0)} \big) =: F^{(m)},
\endaligned
$$
with zero initial data
$$
\big( v_i^{(m)} - v_i^{(0)}, \del_t v_i^{(m)} - \del_t v_i^{(0)} \big) (t_0) = (0, 0).
$$

Acting the vector field $\del^I L^J$ with $|I| + |J| \leq N-2$ to the equation of $v_i^{(m)} - v_i^{(0)}$, and then the energy estimates give
$$
\aligned
E_m \big(t, \del^I L^J v_i^{(m)} - \del^I L^J v_i^{(0)} \big)^{1/2}
\leq 
&\int_{t_0}^t \big\| \del^I L^J F^{(m)} \big\|_{L^2(\RR^3)} \, dx,
\endaligned
$$
and by the fact $\big| \del \del^I L^J v \big| \lesssim C_1 \eps t^{-1}$ we further have 
$$
\aligned
E_m \big(t, \del^I L^J v_i^{(m)} - \del^I L^J v_i^{(0)} \big)^{1/2}
\lesssim
m^2 C_1 \eps t 
+
C_1 \eps \sum_{|I| + |J| \leq N-2} \int_{t_0}^t t'^{-1} E_m \big(t', \del^I L^J v_i^{(m)} - \del^I L^J v_i^{(0)} \big)^{1/2} \, dt',
\endaligned
$$
In succession, we apply the Gronwall's inequality to obtain
\bel{eq:convergence12}
\sum_{|I| + |J| \leq N-2} E_m \big(t, \del^I L^J v_i^{(m)} - \del^I L^J v_i^{(0)} \big)^{1/2}
\lesssim
m^2 C_1 \eps t^{1 + C C_1 \eps},
\ee
with $C$ a generic constant.

Thus by choosing $\eps$ sufficiently small such that $C C_1 \eps \leq \delta/2$, we complete the proof for the cases of $|I| + |J| \leq N-2$.

Based on the estimates obtained, and the Klainerman-Sobolev inequality \eqref{eq:Va-K-S2}, we obtain
$$
\sum_{{|I| + |J| \leq N-5}} \big\| \del \del^I L^J v_i^{(m)} - \del \del^I L^J v_i^{(0)} \big\|_{L^\infty(\RR^3)}
\lesssim 
m^2 C_1 \eps t^{-1},
$$
then for $|I| + |J| \leq N$ with $N\geq 6$ we can bound
$$
\int_{t_0}^t \big\| \del^I L^J F^{(m)} \big\|_{L^2(\RR^3)} \, dx
\lesssim
m^2 C_1 \eps t^{1+\delta/2} 
+
C_1 \eps \sum_{|I| + |J| \leq N} \int_{t_0}^t t'^{-1} E_m \big(t', \del^I L^J v_i^{(m)} - \del^I L^J v_i^{(0)} \big)^{1/2} \, dt',
$$
which further yields
$$
\aligned
&\sum_{|I| + |J| \leq N} E_m \big(t, \del^I L^J v_i^{(m)} - \del^I L^J v_i^{(0)} \big)^{1/2}
\\
\lesssim
& m^2 C_1 \eps t^{1+\delta/2} 
+
C_1 \eps \sum_{|I| + |J| \leq N} \int_{t_0}^t t'^{-1} E_m \big(t', \del^I L^J v_i^{(m)} - \del^I L^J v_i^{(0)} \big)^{1/2} \, dt'.
\endaligned
$$
Again, the Gronwall's inequality deduces that (by letting $\eps$ sufficiently small)
\bel{eq:convergence13}
\sum_{|I| + |J| \leq N} E_m \big(t, \del^I L^J v_i^{(m)} - \del^I L^J v_i^{(0)} \big)^{1/2}
\lesssim
m^2 C_1 \eps t^{1 + \delta}.
\ee

Till now, the proof is complete.
\end{proof}

%======================================================

\section*{Acknowledgements}

The author is grateful to Philippe G. LeFloch (Sorbonne University) for introducing this problem to him, and for many discussions. The author also would like  to thank Siyuan Ma (Sorbonne University) and Chenmin Sun (University of Cergy-Pontoise) for helpful discussions. %This project has received support from .

%\newpage

%==========================================================================================================

{\footnotesize

}

\end{document}